\documentclass[a4paper]{amsart}
\usepackage{amsmath,amssymb,times, amscd, graphicx, xspace,mathrsfs,a4wide,hyperref,url}
\hypersetup{
  colorlinks   = true, 
  urlcolor     = blue, 
  linkcolor    = blue, 
  citecolor   = red 
}

\newcommand{\Z}{\ensuremath{\mathbb{Z}}\xspace}
\newcommand{\Q}{\ensuremath{\mathbb{Q}}\xspace}
\newcommand{\R}{\ensuremath{\mathbb{R}}\xspace}
\newcommand{\C}{\ensuremath{\mathbb{C}}\xspace}
\newcommand{\A}{\ensuremath{\mathbb{A}}\xspace}
\newcommand{\F}{\ensuremath{\mathbb{F}}\xspace}
\newcommand{\Sp}{\mathrm{Sp}\xspace}

\newcommand{\m}{\ensuremath{\mathfrak{m}}\xspace}

\newcommand{\n}{\ensuremath{\mathfrak{n}}\xspace}

\newcommand{\q}{\ensuremath{\mathfrak{q}}\xspace}
\newcommand{\lf}{\ensuremath{\mathfrak{l}}\xspace}

\newcommand{\OO}{\ensuremath{\mathcal{O}}\xspace}

\newcommand{\Nm}{\mathrm{Nm}\xspace}

\newcommand{\comment}[1]{}

\DeclareMathOperator{\Gal}{Gal}

\DeclareMathOperator{\Hom}{Hom}

\newcommand{\rhobar}{\ensuremath{\overline{\rho}}\xspace}
\newcommand{\T}{\ensuremath{\mathbb{T}}\xspace}

\newcommand{\GL}{\ensuremath{\mathrm{GL}}\xspace}
\newcommand{\FF}{\ensuremath{\mathscr{F}}\xspace}

\newcommand{\G}{\ensuremath{\mathbb{G}}\xspace}

\newcommand{\af}{\mathfrak{a}\xspace}

\newcommand{\pf}{\ensuremath{\mathfrak{p}}\xspace}

\newcommand{\cH}{\mathscr{H}\xspace}

\newtheorem{theorem}[subsection]{Theorem}
\newtheorem{proposition}[subsection]{Proposition}
\newtheorem{corollary}[subsection]{Corollary}
\newtheorem{lemma}[subsection]{Lemma}

\theoremstyle{definition}
\newtheorem{definition}[subsection]{Definition}

\newtheorem{remark}[subsection]{Remark}

\setcounter{tocdepth}{1}

\mathchardef\mhyphen="2D
\title{Towards local--global compatibility for Hilbert modular forms of low weight}
\author{James Newton}
\input xy
\xyoption{all}
\begin{document}
\bibliographystyle{abbrv}
\begin{abstract}
We prove some new cases of local--global compatibility for the Galois representations associated to Hilbert modular forms of low weight (that is, partial weight one).
\end{abstract}
\maketitle
\section{Introduction}
In this paper, we study the problem of local--global compatibility for the Galois representations attached to Hilbert modular forms of low weight by Jarvis \cite{JarGalRep}. We begin by recalling the main theorem of that paper. Let $F$ be a totally real finite extension of $\Q$. For a prime $p$, let $\iota$ be an isomorphism from a fixed algebraic closure $\overline{\Q}_p$ of $\Q_p$ to $\C$. If $v$ is a finite place of $F$ and $\pi_v$ is an admissible smooth irreducible representation of $\GL_2(F_v)$ over $\C$ we denote the Frobenius-semisimple Weil--Deligne representation associated to $\pi_v$ under the (Hecke normalised) local Langlands correspondence by $\sigma(\pi_v)$. We denote by $\sigma^{\iota}(\pi_v)$ the Weil--Deligne representation with coefficients in $\overline{\Q}_p$ obtained by composition with $\iota^{-1}$. For a continuous $\overline{\Q}_p$-representation $\rho$ of $\Gal(\overline{F}_v/F_v)$, with $v\nmid p$ a finite place of $F$, we denote by $WD(\rho)$ the Weil--Deligne representation associated to $\rho$ by Grothendieck's $p$-adic monodromy theorem, with Frobenius semisimplification $WD(\rho)^{F\mhyphen ss}$ and semisimplification $WD(\rho)^{ss}$ (i.e. we set the monodromy operator to zero to obtain $WD(\rho)^{ss}$).

Results of Carayol \cite{Car}, Blasius and Rogawski \cite{MR1235020}, Rogawski and Tunnell \cite{RogTun},  Taylor \cite{Ta} and Jarvis \cite{JarGalRep} have shown the following:
\begin{theorem}\label{Jarlg}
Let $\pi$ be a cuspidal algebraic\footnote{By algebraic, we just mean that the usual parity condition on the weights holds.} automorphic representation of $\GL_2(\A_F)$, such that for infinite place $\tau$ of $F$ the local factor $\pi_\tau$ is either discrete series or holomorphic limit of discrete series. Then there exists an irreducible representation
\[r_{p,\iota}(\pi): \Gal(\overline{F}/F)\rightarrow \GL_2(\overline{\Q}_p)\] such that if $v$ is a finite place of $F$, with $v\nmid p$, and one of the following holds:
\begin{itemize}
\item $\pi_\tau$ is discrete series for all infinite places $\tau$
\item $\pi_v$ is not special (i.e. $\pi_v$ is not a twist of the Steinberg representation)
\end{itemize}then 
\[WD(r_{p,\iota}(\pi)|_{\Gal(\overline{F}_v/F_v)})^{F\mhyphen ss}\cong\sigma^{\iota}(\pi_v).\]
\end{theorem}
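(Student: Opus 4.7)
The proof naturally splits along the two bulleted hypotheses. In both cases one must first construct $r_{p,\iota}(\pi)$ and then verify the local identification at $v$.

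\textbf{Case 1: $\pi_\tau$ is discrete series at every infinite place.} The plan is to realise $r_{p,\iota}(\pi)$ inside the $p$-adic \'etale cohomology of a Shimura curve. Via the Jacquet-Langlands correspondence, I would choose a quaternion algebra $D/F$ split at $v$ and at every place above $p$ and ramified at all infinite places save one (inserting an auxiliary split finite place if $[F:\Q]$ is even), so that $\pi$ transfers to an automorphic representation $\pi^D$ of $D^\times(\A_F)$; then cut $r_{p,\iota}(\pi)$ out of the $\pi^D$-isotypic component of $H^1$ of the associated Shimura curve with coefficients in the local system dictated by the infinite weight of $\pi$. For the identification at $v$: when $\pi_v$ is unramified or principal series, the Eichler-Shimura relation at $v$ pins down $WD(r_{p,\iota}(\pi)|_{\Gal(\overline{F}_v/F_v)})^{F\mhyphen ss}$; when $\pi_v$ is supercuspidal or special one instead invokes Carayol's analysis of integral models of the Shimura curve at $v$ (or takes $D$ ramified at $v$ and passes to a zero-dimensional Shimura variety) to recover the conductor, inertial type, and nilpotent monodromy operator prescribed by $\sigma^{\iota}(\pi_v)$.

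\textbf{Case 2: $\pi_v$ is not special, but $\pi$ may be only partial weight one at infinity.} With some $\pi_\tau$ merely a holomorphic limit of discrete series there is no direct cohomological realisation, so the plan is to proceed by $p$-adic congruences, following Taylor and Jarvis. I would construct a sequence of cuspidal algebraic automorphic representations $\pi_n$ of $\GL_2(\A_F)$ which are discrete series at every infinite place, whose local component at $v$ agrees with $\pi_v$ (up to an unramified twist one controls), and whose Hecke eigenvalues $p$-adically approximate those of $\pi$ to accuracy $p^n$. Case 1 attaches $r_{p,\iota}(\pi_n)$ to each $\pi_n$, and the $p$-adic limit of the resulting compatible system is taken as $r_{p,\iota}(\pi)$. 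For the local identification at $v$: since $\pi_v$ is not special, $\sigma^{\iota}(\pi_v)$ has trivial monodromy operator and is determined by its underlying Weil representation; by construction $\sigma^{\iota}(\pi_{n,v}) \cong \sigma^{\iota}(\pi_v)$, so the identification supplied by Case 1 for each $\pi_n$ passes to the limit using continuity of traces of Frobenius powers on the inertia-fixed subspace.

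The hard part -- and the main obstacle -- is the construction in Case 2 of the approximating sequence $\pi_n$ while preserving the possibly ramified local type at $v$: naive congruence methods control only unramified Hecke data, so one is forced to work with $p$-adic families at fixed tame level and invoke a Hida-theoretic lifting argument to realise the system of Hecke eigenvalues of $\pi$ inside the cohomological ordinary part of the space of $p$-adic automorphic forms on a suitable quaternion algebra. The hypothesis that $\pi_v$ is not special is precisely what ensures the Weil-Deligne parameter at $v$ is $p$-adically rigid under variation in such a family; for $\pi_v$ a twist of Steinberg the monodromy operator can degenerate in $p$-adic limits, which is exactly why that possibility is excluded from the second bullet.
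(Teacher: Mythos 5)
The paper does not supply a proof of this theorem; it is stated as a citation to Carayol, Blasius--Rogawski, Rogawski--Tunnell, Taylor, and Jarvis, and the body of the paper takes it as input. So there is no ``paper's own proof'' to compare against. Your outline is a broadly faithful account of the strategy those references actually use: Carayol's geometric construction via \'etale cohomology of Shimura curves and analysis of their bad reduction in the cohomological (discrete-series-at-infinity) case, and Taylor's and Jarvis's congruence/$p$-adic family arguments to handle partial weight one, with the caveat that monodromy degenerates under $p$-adic limits and therefore the special case must be excluded when the weight is not cohomological.

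That said, a few details in your sketch are off. In Case 1, when $[F:\Q]$ is even you need to insert an auxiliary \emph{ramified} finite place (or base change to odd degree), not a split one: a quaternion algebra is ramified at an even number of places, and ramifying at $d-1$ infinite places already leaves odd parity when $d$ is even. Moreover, taking $D$ additionally ramified at $v$ does not produce a zero-dimensional Shimura variety as long as $D$ remains split at one infinite place; the point of Carayol's auxiliary quaternion algebras is rather to compare the Galois representations obtained from two curves (identified via strong multiplicity one and \v{C}ebotarev) and read off local data through the Jacquet--Langlands transfer at $v$. In Case 2, the reduction to ``continuity of traces of Frobenius powers on inertia-fixed subspaces'' is not enough on its own to recover the full Weil representation at $v$ when $\pi_v$ is a ramified principal series or supercuspidal; Jarvis's argument has to control the restriction to inertia more directly (for instance by fixing the inertial type along the family), which is where the fixed-tame-level Hida-theoretic setup you allude to actually earns its keep. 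None of this changes the overall shape of the argument, but it is worth being precise about, since the non-degeneration of monodromy under congruences is exactly the gap this paper is written to address.
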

\begin{remark}
The irreducibility of the Galois representation appearing in this theorem is proved using an argument of Ribet \cite{RibetIrred} (see \cite[Proposition 3.1]{TaylorII}).
\end{remark}
\begin{remark}\label{weaklgc}
In the excluded case, where $\pi_\tau$ is holomorphic limit of discrete series for some $\tau$ and $\pi_v$ is special, then Jarvis \cite{JarGalRep} proves that \[WD(r_{p,\iota}(\pi)|_{\Gal(\overline{F}_v/F_v)})^{ss}\cong\sigma^{\iota}(\pi_v)^{ss}.\]
\end{remark}

The main result of this paper is to address some of the excluded cases in Theorem \ref{Jarlg}. We prove:

\begin{theorem}\label{mylgc}
Let $\pi$ be as in Theorem \ref{Jarlg}, suppose that $\pi_\tau$ is holomorphic limit of discrete series for some infinite place $\tau$ and let $v\nmid p$ be a finite place of $F$ such that $\pi_v$ is special.

Suppose the following technical hypotheses hold:
\begin{enumerate}
\item The prime $p$ is absolutely unramified in $F$.
\item For $w$ a place of $F$ with $w|p$, $\pi_w$ is an unramified principal series representation. 
\item Moreover, for each $w|p$ we have $\pi_w \cong \mathrm{Ind}(\mu_1\otimes\mu_2)$ (normalised parabolic  induction from a Borel subgroup) with $\mu_1, \mu_2$ \emph{distinct} unramified characters of $F_w^\times$.
\item The residual representation \[\overline{r}_{p,\iota}(\pi): \Gal(\overline{F}/F)\rightarrow \GL_2(\overline{\F}_p)\] is irreducible.
\end{enumerate}

Then \[WD(r_{p,\iota}(\pi)|_{\Gal(\overline{F}_v/F_v)})^{F\mhyphen ss}\cong\sigma^{\iota}(\pi_v).\]
\end{theorem}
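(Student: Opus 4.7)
Since Remark \ref{weaklgc} already supplies the matching of semisimplifications, the task reduces to showing that the monodromy operator $N$ of $WD(r_{p,\iota}(\pi)|_{\Gal(\overline{F}_v/F_v)})$ is non-zero: in dimension two, together with the known semisimple Frobenius action, this pins down the Frobenius-semisimple Weil--Deligne isomorphism class and matches it with $\sigma^\iota(\pi_v)$. The overall plan is to realise $\pi$ as a classical point of a $p$-adic eigenvariety $\mathcal{E}$, to approximate it by classical automorphic representations $\pi'$ of fully cohomological weight whose local factor at $v$ is still special, and to transfer the non-vanishing of $N$ from these $\pi'$ (to which Theorem \ref{Jarlg} applies unconditionally) back to $\pi$.

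Concretely, I would first Jacquet--Langlands transfer $\pi$ to a quaternion algebra $D/F$ split at $v$ and at all places above $p$ and ramified at every infinite place except one $\tau_0$, chosen among the places where $\pi_{\tau_0}$ is holomorphic limit of discrete series. Under hypotheses (1) and (2), overconvergent $p$-adic automorphic forms on the associated Shimura curve produce the eigenvariety $\mathcal{E}$; hypothesis (3) supplies a non-critical finite-slope $p$-stabilisation of $\pi$ corresponding to a classical point $x_\pi\in\mathcal{E}$, and hypothesis (4) promotes the universal Galois pseudocharacter to a genuine family of Galois representations in a neighbourhood of $x_\pi$. Working at Iwahori level at $v$ (the level appropriate to $\pi_v$ special), classical cohomological points $\pi'\in\mathcal{E}$ near $x_\pi$ have $\pi'_v$ either special or unramified principal series; an argument using that the $U_v$-eigenvalue at $\pi$ matches the Steinberg normalisation should isolate an irreducible component of $\mathcal{E}$ through $x_\pi$ along which classical cohomological points have $\pi'_v$ special. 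Density of classical cohomological points then supplies the desired $\pi'$, and Theorem \ref{Jarlg} yields full Frobenius-semisimple compatibility at $v$ for $\pi'$.

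The principal obstacle is the transfer step: ``$N\neq 0$'' is an open, not closed, condition on $\mathcal{E}$, so it cannot be deduced from $\pi'$ to $\pi$ by mere specialisation. The subtlety is that an unramified principal series representation at $v$ whose Satake parameters happen to be in the Steinberg ratio $\{\alpha,q_v\alpha\}$ gives rise to a Weil--Deligne representation with the same semisimplification as $\sigma^\iota(\pi_v)$ but vanishing monodromy; this ``companion'' scenario is precisely what must be excluded at $\pi$ itself. I expect this step to require a careful analysis of how the Steinberg and principal-series components of the local Galois deformation ring at $v$ meet $\mathcal{E}$ near $x_\pi$, combined with an $R=T$-style identification (using irreducibility of $\overline{r}_{p,\iota}(\pi)$) that places $r_{p,\iota}(\pi)|_{\Gal(\overline{F}_v/F_v)}$ on the Steinberg component of the local deformation space. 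Hypotheses (1)--(4) all enter crucially into this last analysis.
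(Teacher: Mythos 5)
You correctly reduce the problem to showing that the monodromy operator at $v$ is nonzero, and you correctly identify the central difficulty: nonvanishing of $N$ is an open condition, so it cannot be deduced by specialising from nearby cohomological classical points on an eigenvariety. However, your proposed remedy --- analysing how the Steinberg versus unramified components of the local Galois deformation ring at $v$ meet the eigenvariety, in an $R=\T$ fashion --- is not what the paper does, and you do not give a concrete reason why this analysis should close the gap. As stated it reads as a description of the difficulty rather than a solution to it.

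The paper's argument runs in the \emph{opposite} direction, by contradiction, and its mechanism is quite different from any deformation-theoretic decomposition of the local ring at $v$. After twisting and a quadratic base change one arranges that $[F:\Q]$ is even, that $\pi_v$ is an unramified twist of Steinberg, and --- crucially --- that there is an \emph{auxiliary} finite place $v'$ where $\pi_{v'}$ is also an unramified twist of Steinberg. One then assumes, for contradiction, that $\rho_v = r_{p,\iota}(\pi)|_{\Gal(\overline F_v/F_v)}$ is unramified. The auxiliary place $v'$ is what makes a Shimura \emph{curve} available: the quaternion algebra $B$ is taken ramified at $v'$ together with all but one infinite place (note that your quaternion algebra, ramified at all but one infinite place and split at all finite places, has the wrong parity once $[F:\Q]$ is even, so does not exist). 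Via completed cohomology of this Shimura curve and Emerton's Jacquet functor, the $p$-adic Mazur principle of \cite{memathann} is applied: since $\rho_v$ is assumed unramified, one can \emph{strip $v$ from the tame level}, producing finite-slope overconvergent Hilbert modular eigenforms $g_S$ (one for each choice of $p$-stabilisation) of level prime to $v$ with the same Hecke eigenvalues outside $v$. The final, indispensable step is Kassaei's gluing and analytic continuation theorem \cite{kassaei}, which patches the $g_S$ into a genuine \emph{classical} Hilbert modular eigenform of level prime to $v$ in partial weight one. This forces $\pi$ to have nonzero $\GL_2(\OO_{F_v})$-invariants, contradicting the assumption that $\pi_v$ is a twist of Steinberg.

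So the genuine gap in your proposal is twofold. First, you have not replaced the (correctly identified) failure of specialisation with a working substitute; the deformation-ring heuristic you sketch is not developed, and nothing in it exploits the key structural inputs that the paper relies on. Second, and more importantly, you do not mention either of the two theorems that actually carry the argument: the $p$-adic level lowering (Mazur's principle) for completed cohomology of Shimura curves, and Kassaei's classicality-via-gluing result for overconvergent Hilbert modular forms of partial weight one. Without these, the contradiction that proves $N\neq 0$ cannot be reached. The need for hypothesis (3) (distinct Satake parameters at $p$) also appears in your write-up only as an input to non-criticality, whereas in the paper it is also essential for having \emph{several} distinct $p$-stabilisations $g_S$ to feed into Kassaei's gluing machine, a point worth making explicit.
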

\begin{remark}
We will not discuss the question of local--global compatibility at places dividing $p$ in this paper. However, under the `$p$-distinguished' hypothesis (3) made above, an argument using analytic continuation of crystalline periods (as done in \cite{Jorzathesis} for the case of low weight Siegel modular forms) shows that the representations $r_{p,\iota}(\pi)$ are crystalline at places dividing $p$, with the expected associated Weil--Deligne representation.
\end{remark}

The main novelty in this Theorem is that we are able to establish non-vanishing of the monodromy action for a Galois representation which is only indirectly related (by congruences) to the cohomology of Shimura varieties. 

\subsection{The technical hypotheses}
We will say a few words about the hypotheses in Theorem \ref{mylgc}. Hypotheses (1) and (2) are satisfied for all but finitely many $p$. Hypothesis (1) appears because we use results of Kassaei \cite{kassaei} on analytic continuation and gluing of overconvergent Hilber modular forms. At least for forms of parallel weight, results have been announced by Pilloni, Stroh \cite{PS-bt} and Sasaki \cite{shuartin} which apply without restriction on the ramification of $p$, so it seems reasonable to hope that this hypothesis could be removed in future. Once hypothesis (1) is removed, hypothesis (2) can be substantially relaxed by first making a base change to an extension $F'/F$ in which $p$ ramifies. In fact, recent work of Kassaei, Sasaki and Tian already allows us to relax hypothesis (2) slightly, but we have kept to the 'unramified' setting for expository reasons.

Hypothesis (3) is again present in \cite{kassaei} and has been removed by \cite{PS-bt}, \cite{shuartin} in their setting, but we also make some independent use of this condition.

We assume hypothesis (4) in order to apply the results of \cite{memathann} (it simplifies the analysis of completed cohomology of Shimura curves). It should be possible to avoid this assumption by modifying the arguments of loc. cit. to work with overconvergent cohomology \cite{AshSte}.  

To summarise, we believe that the techniques described in this paper could in principle prove a version of the above theorem with just one technical hypothesis: for each place $w|p$ of $F$, $\pi_w$ is not a twist of the Steinberg representation. To handle the remaining cases seems to require a new idea or a different method.

\subsection{Sketch of proof}
We now sketch the proof of Theorem \ref{mylgc}. By twisting and base change we may assume that $[F:\Q]$ is even, that $\pi_{v}$ is an unramified twist of the Steinberg representation, and that we have an auxiliary finite place $w$ such that $\pi_w$ is also an unramified twist of Steinberg. In the light of Remark \ref{weaklgc} we just need to show that the Weil--Deligne representation associated to the local representation \[\rho_v=r_{p,\iota}(\pi)|_{\Gal(\overline{F}_v/F_v)}\] has a non-zero monodromy operator. Equivalently, we must show that $\rho_v$ is not an unramified representation. We will assume that $\rho_v$ \emph{is} unramified and obtain a contradiction. 

The auxiliary Steinberg place $w$ allows us to find systems of Hecke eigenvalues attached to $\pi$ in the $p$-adically completed cohomology of Shimura curves associated to indefinite quaternion algebras. In this context, a $p$-adic analogue of Mazur's principle (proved in \cite{memathann}) allows us to show that, since the representation $\rho_{v}$ is {unramified}, we can strip $v$ from the level of $\pi$. More precisely we can produce overconvergent Hilbert modular forms, with level prime to $v$, which share the same system of Hecke eigenvalues (outside $v$) as $\pi$. Finally, a generalisation, due to Kassaei, of Buzzard--Taylor's gluing and analytic continuation of overconvergent eigenforms allows us to produce a \emph{classical} Hilbert modular form, with level prime to $v$, contributing to $\pi$. But we assumed that $\pi_v$ was (a twist of) Steinberg, so $\pi$ contains no non-zero $\GL_2(\OO_{F_v})$--invariant vectors. Therefore we obtain the desired contradiction.

Readers familiar with the theory of $p$-adic and overconvergent automorphic forms may find it amusing that we make use of three different avatars of overconvergent automorphic forms in this paper --- firstly geometrically defined overconvergent Hilbert modular forms (sections of automorphic line bundles on strict neighbourhoods of the ordinary locus in Hilbert modular varieties as in, for example, \cite{KL}), secondly the overconvergent automorphic forms on definite quaternion algebras defined by Buzzard \cite{Bu2} and finally the spaces obtained by applying Emerton's locally analytic Jacquet functor to the $p$-adically completed cohomology of Shimura curves (\cite{Emint,memathann}).

The organisation of our paper is as follows: first (\ref{notred}) we reduce the statement of Theorem \ref{mylgc} to a special case. Then (\ref{evars}) we describe the various eigenvarieties we will make use of, together with their construction. We next explain how to find systems of Hecke eigenvalues arising from the automorphic representations $\pi$ of interest in the completed cohomology of Shimura curves, where we apply the level lowering results of \cite{memathann} (\ref{cc}). Finally we return to overconvergent modular forms on Hilbert modular varieties and apply a key result of Kassaei \cite{kassaei} (\ref{end}). 
\subsection{Other approaches}
We note that Luu \cite[\S 3.2]{Luu} has also recently described an approach to proving some cases of local--global compatibility for Hilbert modular forms of low weight, when $\pi$ satisfies an ordinarity hypothesis at places dividing $p$. We will use the same notation as in the previous subsection. Using the assumption that $\rho_v$ is unramified, Luu applies a modularity lifting theorem to produce an ordinary $p$-adic Hilbert modular form $g$ with level prime to $v$ and the same system of Hecke eigenvalues (outside $v$) as $\pi$, and imposes a hypothesis that amounts to ruling out the existence of $g$. It may also be possible to show that $g$ is classical (hence obtaining a contradiction) using a variant of the methods applied in the parallel weight $1$ case (as in, for example, \cite{kassaei}), but it is not obvious to the author that these methods can be easily applied. One obstacle is that, unlike the parallel weight $1$ case, not all the $p$-stabilisations of the newform generating $\pi$ are ordinary. The advantage of our method is that these $p$-stabilisations still have finite slope, so we can apply the level lowering results of \cite{memathann} to obtain several overconvergent forms and then apply Kassaei's work \cite{kassaei}.
\subsection{Examples}
We note that until recently not many examples of Hilbert modular forms of partial weight one were known --- in particular, as far as the author knows, the only examples around were CM forms (and therefore not twists of Steinberg at any place). However, recently Moy and Specter \cite{MoySpe} have computed examples which are non-CM and Steinberg at a finite place. 

\section{Notation and reductions}\label{notred}
We let $F/\Q$ be a totally real, finite extension of $\Q$, and fix a prime $p$ such that $p$ is absolutely unramified in $F$. We denote by $\Sigma$ the set of Archimedean places of $F$. We write $\A_F$ for the adeles of $F$ and $\A_{F}^\infty$ for the finite adeles. For a finite place $v$ of $F$ we write $\OO_v$ for the ring of integers in the local field $F_v$. For convenience, we fix once and for all a choice of uniformiser $\varpi_v \in \OO_v$ for each finite place $v$. 

We denote by $\Sigma_p$ the set of embeddings $F \hookrightarrow \overline{\Q}_p$ and for a place $\pf|p$ denote by $\Sigma_\pf$ those embeddings which extend to $F_\pf$. Our fixed isomorphism $\iota: \overline{\Q}_p \cong \C$ induces a bijection between $\Sigma$ and $\Sigma_p$.

We will need some notation for (limit of) discrete series representations of $\GL_2(\R)$. For $k \ge 2$ and $w$ integers of the same parity we let $D_{k,w}$ denote the discrete series representation of $\GL_2(\R)$ with central character $t \mapsto t^{-w}$ defined in section 0.2 of \cite{Car}. For $k=1$ and $w$ an odd integer we define $D_{1,w}$ to be the limit of discrete series $\mathrm{Ind}(\mu,\nu)$ where the induction is a normalised parabolic induction from a Borel subgroup and $\mu, \nu$ are characters of $\R^\times$ defined by $\mu(t)=|t|^{-w/2}\mathrm{sgn}(t)$ and $\nu(t)=|t|^{-w/2}$.

We assume $\pi$ is an automorphic representation of $\GL_2(\A_F)$ as in the statement of Theorem \ref{mylgc}, and that $v$ is a finite place of $F$ with $\pi_v$ special. First we note that by twisting and making a quadratic base change (to an extension of $F$ in which $v$ splits) we can assume without loss of generality that $[F:\Q]$ is even, that $\pi_v$ is an unramified twist of the Steinberg representation and that there is another finite place $v'$ with $\pi_{v'}$ also an unramified twist of Steinberg.

With this in mind, for the rest of the paper we fix a cuspidal automorphic representation $\pi_0$ of $\GL_2(\A_F)$ as in the statement of Theorem \ref{mylgc} and moreover assume that
\begin{itemize}
\item $[F:\Q]$ is even,
\item $\pi_{0,v}$ is an unramified twist of Steinberg,
\item there is a finite place $v'$, coprime to $p$ and $v$ such that $\pi_{0,v'}$ is an unramified twist of Steinberg,
\item $\pi_{0,\tau} \cong D_{k_{\tau},w}$ for each $\tau \in \Sigma$ with $k_\tau$ an odd positive integer and $w$ an odd integer (independent of $\tau$).
\end{itemize}

\subsection{Hilbert modular forms and varieties}\label{Hilbsection}
We now proceed to reduce Theorem \ref{mylgc} to a statement about Hilbert modular forms. Let $L\subset\overline{Q}_p$ be a finite extension of $\Q_p$ which contains the image of every embedding from $F$ into $\overline{\Q}_p$. Let $\n$ and $\m$ be two coprime ideals of $\OO_F$, both coprime to $p$, and suppose that $\n$ is divisible by a rational integer which is $\ge 4$. 
\begin{definition}\label{Hilb}
Let $\mathfrak{c}$ be a fractional ideal of $F$ such that $\mathbf{N}\mathfrak{c}$ is coprime to $\m\n$. We denote by $\underline{\mathscr{M}}^{\mathfrak{c}}(\n,\m)$ the functor from schemes over $\OO_L$ to sets taking $S$ to isomorphism classes of tuples $(A,\lambda,[\eta])$, where\begin{itemize}
\item $A$ is a Hilbert--Blumenthal Abelian scheme over $S$ (in particular $A$ is equipped with an action of $\OO_F$, \cite[1.1]{KL})
\item $\lambda$ is a $\mathfrak{c}$-polarisation of $A$ (\cite[1.2]{KL})
\item $\eta$ is an $\OO_F$-equivariant closed immersion of $S$-group schemes \[\eta: (\OO_F/\m\n)^\vee \hookrightarrow A[\m\n]\] and $[\eta]$ is its equivalence class under the natural action of $(\OO_F/\m)^\times$. Here $(\OO_F/\m\n)^\vee$ denotes the Cartier dual of the constant $S$-group scheme (with $\OO_F$ action) $\OO_F/\m\n$.
\end{itemize}
\end{definition}

Under our assumptions, the functor $\underline{\mathscr{M}}^{\mathfrak{c}}(\n,\m)$ is representable by a smooth $\OO_L$-scheme ${\mathscr{M}}^{\mathfrak{c}}(\n,\m)$, and there is a good theory of toroidal and minimal compactifications (see \cite[1.6, 1.8]{KL} --- although we use a slightly modified level structure, everything in this reference goes through).

Having fixed a choice of representatives $\mathfrak{c}_1,...,\mathfrak{c}_h$ for the narrow class group of $F$ (which we may assume all satisfy $\mathbf{N}\mathfrak{c}_i$ coprime to $p\n\m$), we define $\mathscr{M}(\n,\m)$ to be the disjoint union of the $\mathscr{M} ^{\mathfrak{c}_i}(\n,\m)$.

For $\kappa$ a homomorphism \[\kappa: \mathrm{Res}^{\OO_F}_\Z \G_{m/\OO_L} \rightarrow \G_{m/\OO_L}\]  we have line bundles $\underline{\omega}^\kappa$ as in \cite[1.4.2]{KL} on $\mathscr{M}(\n,\m)$, which extend to suitable toroidal compactifications. We assume that $w$ is an integer such that the character $\mathbf{N}^w\cdot\kappa^{-1}$ admits a square root. Then, following \cite[1.11]{KL} we define Hecke operators $T_\af$ for each prime ideal $\af$ of $\OO_F$ coprime to $\m\n$ and $U_\af$ for prime ideal $\af$ dividing $\m\n$. 

Denote by $\T(\n,\m)$ the polynomial algebra over $\Z$ generated by symbols $T_\af$ and $U_\af$ as above. This algebra naturally acts on $H^0(\mathscr{M}(\n,\m)_L,\underline{\omega}^{\kappa})$, and we say that a non-zero element $f$ of this $L$-vector space is a \emph{Hecke eigenform} if the action of $\T(\n,\m)$ preserves the one-dimensional space $L\cdot f$. If $f$ is a cuspidal Hecke eigenform we denote by $\theta_f$ the character $\T(\n,\m)\rightarrow L$ giving the action of the Hecke algebra on $f$. After projecting to a classical Hilbert modular form (as described in \cite[1.11.8]{KL}) $f$ generates (a model over $L$ for) the finite part $\pi_f^{\infty}$ of a cuspidal automorphic representation $\pi_f$ of $\GL_2(\A_F)$. For $\tau \in \Sigma$ we have $\pi_{\tau} = D_{k_\tau,w}$, where the $k_\tau$ can be read off from $\kappa$ (using the bijection between $\Sigma$ and $\Sigma_p$). The central character of $\pi_f$ is an algebraic Hecke character of the form $\chi_f\omega^{-w}$, where $\omega$ is the norm character and $\chi_f$ is a finite order character. The Galois representation $r_{\pi_f,\iota}$ is the unique semisimple representation, unramified outside $p\m\n$, such that for $\q\nmid p\m\n$ a geometric Frobenius element at $\q$ has characteristic polynomial $X^2-\theta_f(T_\q)X + (\mathbf{N}\q)^{1-w}\iota^{-1}\chi_f(\varpi_\q)$

In the subsequent part of the paper we will prove:
\begin{proposition}\label{proplgc}
Suppose $\m = \q\lf$ is a product of two prime ideals. Let $f \in H^0(\mathscr{M}(\n,\m)_L,\underline{\omega}^{\kappa})$ be a Hecke eigenform. Suppose that:
\begin{itemize}
\item The local factor $\pi_{f,\q}$ is an unramified twist of the Steinberg representation.
\item The residual representation \[\overline{r}_{p,\iota}(\pi_f): \Gal(\overline{F}/F)\rightarrow \GL_2(\overline{\F}_p)\] is irreducible.
\item The polynomials $X^2-\theta_f(T_\pf)X + (\mathbf{N}\pf)^{1-w}\iota^{-1}\chi_f(\varpi_\pf)$ have distinct roots for all prime ideals $\pf|p$.
\item The local representation \[{r}_{p,\iota}(\pi_f)|_{\Gal(\overline{F}_\lf/F_\lf)}\] is unramified.
\end{itemize}

Then there exists a Hecke eigenform $g \in H^0(\mathscr{M}(\n,\q)_L,\underline{\omega}^{\kappa})$ such that the Hecke eigenvalues of $f$ and $g$ coincide outside $\lf$ and $\theta_f(U_\lf)$ is one of the roots of $X^2-\theta_g(T_\lf)X + (\mathbf{N}\lf)^{1-w}\iota^{-1}\chi_f(\varpi_\lf)$.
\end{proposition}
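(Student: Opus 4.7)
The plan is to implement the three-step strategy outlined in the introduction. Since $[F:\Q]$ is even and $\pi_{f,\q}$ is an unramified twist of the Steinberg representation, we may fix an indefinite quaternion algebra $D/F$ ramified precisely at $\q$ and at all but one archimedean place. The Jacquet--Langlands correspondence transfers $\pi_f$ to an automorphic representation $\pi_f^D$ of $D^\times(\A_F)$, so the system of Hecke eigenvalues $\theta_f$ appears in the $p$-adically completed cohomology $\widetilde{H}^1$ of the tower of associated Shimura curves at a tame level $U^p$ that is hyperspecial outside $\n\q\lf$ and has Iwahori component at $\lf$.

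First I would invoke the $p$-adic Mazur principle of \cite{memathann}. Its hypotheses---that $\overline{r}_{p,\iota}(\pi_f)$ is irreducible and unramified at $\lf$---are guaranteed by the irreducibility assumption and by the fact that the unramifiedness of $r_{p,\iota}(\pi_f)|_{\Gal(\overline{F}_\lf/F_\lf)}$ passes to its reduction. The conclusion is that $\theta_f$ also appears in $\widetilde{H}^1$ at a new tame level $(U^p)'$ with the $\lf$-component enlarged to hyperspecial. Next I would pass from completed cohomology back to an overconvergent Hilbert modular form: applying Emerton's locally analytic Jacquet functor at $p$ to the $\theta_f$-isotypic part of $\widetilde{H}^1$ at tame level $(U^p)'$, the $p$-distinguished hypothesis (distinct roots of the Hecke polynomial at each $\pf|p$) allows us to pick out the unique finite-slope eigencomponent whose $U_\pf$-eigenvalues match those of $f$. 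This produces a classical point on the quaternionic eigenvariety attached to $D$; a Jacquet--Langlands comparison between this quaternionic eigenvariety and the Hilbert eigenvariety built from overconvergent sections of $\underline{\omega}^{\kappa}$ then yields an overconvergent cuspidal Hecke eigenform $g^\dagger$ of tame level $\n\q$ and finite slope at each $\pf|p$, whose Hecke eigenvalues agree with those of $f$ outside $\lf$ and whose $U_\lf$-eigenvalue is one of the two roots of $X^2-\theta_{g^\dagger}(T_\lf)X+(\mathbf{N}\lf)^{1-w}\iota^{-1}\chi_f(\varpi_\lf)$ (the two finite-slope $\lf$-stabilisations of a level-$\n\q$ form correspond to the two roots). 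Finally, because $p$ is absolutely unramified in $F$ and the $p$-distinguished hypothesis holds, Kassaei's gluing and analytic continuation theorem \cite{kassaei} upgrades $g^\dagger$ to a classical Hecke eigenform $g \in H^0(\mathscr{M}(\n,\q)_L,\underline{\omega}^{\kappa})$ with the required Hecke eigenvalues.

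The main obstacle I anticipate is the careful bookkeeping between the three incarnations of overconvergent automorphic forms appearing---geometric overconvergent Hilbert modular forms, the Buzzard-style quaternionic overconvergent forms underlying the quaternionic eigenvariety, and the Emerton-style locally analytic vectors in the completed cohomology of Shimura curves. In particular one must check that the point produced on the quaternionic eigenvariety by the Mazur-principle step lies on a component that matches, via Jacquet--Langlands, a Hilbert eigenvariety point of tame level exactly $\n\q$ (rather than some intermediate level still remembering information at $\lf$), and that the local automorphic types at all primes of $\n\q$ are preserved across the transfer. Once a finite-slope overconvergent Hilbert modular form of the correct tame level is in hand, Kassaei's classicality theorem is invoked essentially as a black box.
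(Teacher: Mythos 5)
Your overall strategy is the same as the paper's: transfer to the indefinite quaternion algebra ramified at $\q$, work with completed cohomology and the locally analytic Jacquet functor, apply the $p$-adic Mazur principle of \cite{memathann} to strip $\lf$ from the level, pass back to overconvergent Hilbert modular forms via an identification of eigenvarieties, and finish with Kassaei. The one organisational difference is that you transfer $\pi_f$ to $D^\times$ via classical Jacquet--Langlands \emph{before} $p$-stabilising, which (if carried out carefully) sidesteps the need to separately verify that the $p$-stabilised points lie on the $\q$-special component of the Hilbert/quaternionic eigenvariety; the paper instead constructs the $p$-stabilised points $x_S$ first on the Kisin--Lai eigenvariety and then invokes a level-raising result from \cite{mehilbert} (Proposition \ref{DT}) for this.

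However, there is a genuine gap in the final step. You produce a \emph{single} overconvergent eigenform $g^\dagger$ and then cite \cite[Theorem 7.10]{kassaei} to classicalise it, treating that theorem as a classicality criterion for one finite-slope form. That is not what Kassaei's theorem does. It is a gluing and analytic continuation theorem in the Buzzard--Taylor style: its input is the full collection of $2^{\#\{\pf\mid p\}}$ overconvergent $p$-stabilisations $g_S$ (one for each choice $S\subset\{\pf\mid p\}$ of root $\alpha_\pf$ or $\beta_\pf$ at each $\pf$), and it produces a classical form by extending each $g_S$ from the multiplicative-ordinary locus over the appropriate pieces of the Hilbert modular variety and patching them along overlaps. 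With only one $g^\dagger$ in hand there is nothing to glue, and no classicality statement is available (indeed, in this partial-weight-one situation there cannot be a naive small-slope classicality criterion). The $p$-distinguished hypothesis is therefore not a device for ``picking out the unique finite-slope eigencomponent'' (and in any case $f$ carries $T_\pf$-eigenvalues, not $U_\pf$-eigenvalues, so there is no single matching $U_\pf$-eigensystem); rather it guarantees that all $2^{\#\{\pf\mid p\}}$ stabilisations exist and are distinct, which is precisely what Kassaei's gluing requires. To repair the argument, you must run the entire Mazur-principle and eigenvariety-comparison machinery for \emph{every} $S$, producing the whole family $\{g_S\}_S$ of overconvergent forms of tame level $\n\q$ with matching prime-to-$p$ eigenvalues, and only then invoke \cite[Theorem 7.10]{kassaei}.
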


We now explain why this proposition is sufficient to deduce Theorem \ref{mylgc}. 
\begin{lemma}
Proposition \ref{proplgc} implies Theorem \ref{mylgc}.
\end{lemma}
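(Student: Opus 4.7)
The plan is to proceed by contradiction: I would assume that $\rho_v := r_{p,\iota}(\pi_0)|_{\Gal(\overline{F}_v/F_v)}$ is unramified, which by Remark \ref{weaklgc} is exactly what one needs to rule out in order to obtain Theorem \ref{mylgc}. I would then fix an ideal $\n$ of $\OO_F$, coprime to $pvv'$ and divisible by a rational integer $\ge 4$, containing the prime-to-$pvv'$ conductor of $\pi_0$ (adding an auxiliary $U_1$-prime if necessary to achieve the divisibility condition). Set $\q = v'$, $\lf = v$, $\m = \q\lf$, and let $\kappa$ be the algebraic character corresponding to the infinity type $(k_\tau)_{\tau \in \Sigma}$ of $\pi_0$. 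Then $\pi_0$ produces a Hecke eigenform $f \in H^0(\mathscr{M}(\n,\m)_L,\underline{\omega}^\kappa)$ with $\pi_f \cong \pi_0$, using that $\pi_{0,v}$ and $\pi_{0,v'}$ are unramified twists of Steinberg and so possess nonzero Iwahori-fixed vectors contributing to the $\m$-level structure.

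Next I would verify the four hypotheses of Proposition \ref{proplgc} for this $f$. The Steinberg condition at $\q = v'$ holds by the reduction in Section \ref{notred}; the irreducibility of $\overline{r}_{p,\iota}(\pi_f) = \overline{r}_{p,\iota}(\pi_0)$ is hypothesis (4) of Theorem \ref{mylgc}; the distinct-roots condition at $\pf | p$ matches hypothesis (3), since in the unramified principal series setting $\pi_{0,\pf} \cong \mathrm{Ind}(\mu_1 \otimes \mu_2)$ the roots of $X^2 - \theta_f(T_\pf)X + \mathbf{N}(\pf)^{1-w}\iota^{-1}\chi_f(\varpi_\pf)$ are (up to the fixed normalisation) the Satake parameters $\mu_i(\varpi_\pf)$, hence distinct exactly when $\mu_1 \neq \mu_2$; and finally the unramifiedness of $r_{p,\iota}(\pi_f)|_{\Gal(\overline{F}_\lf/F_\lf)} = \rho_v$ is the contradiction hypothesis. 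Applying Proposition \ref{proplgc} I obtain a Hecke eigenform $g \in H^0(\mathscr{M}(\n,\q)_L,\underline{\omega}^\kappa)$ whose system of eigenvalues agrees with that of $f$ outside $\lf$.

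To exploit this $g$ and reach a contradiction, I would first argue that $g$ is cuspidal, so that it generates a cuspidal automorphic representation $\pi_g$ of $\GL_2(\A_F)$. If instead $g$ were Eisenstein, then $\theta_g$ would be the trace of a direct sum of two Hecke characters, and because $\theta_f$ and $\theta_g$ coincide outside $\lf$, Chebotarev would force $r_{p,\iota}(\pi_f) = r_{p,\iota}(\pi_0)$ to be reducible, contradicting hypothesis (4). Since the level of $g$ is coprime to $v$, the local factor $\pi_{g,v}$ is spherical. At every unramified finite prime $w \neq v$ the Satake parameters of $\pi_{g,w}$ and $\pi_{0,w}$ agree (both are determined by the common $T_w$-eigenvalue), so $\pi_{g,w} \cong \pi_{0,w}$ at almost every place. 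Strong multiplicity one for $\GL_2$ over a number field then forces $\pi_0 \cong \pi_g$, in particular $\pi_{0,v} \cong \pi_{g,v}$, which is absurd since $\pi_{0,v}$ is a twist of the Steinberg representation while $\pi_{g,v}$ is spherical.

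The main obstacle in this reduction is really only the initial step of exhibiting $\pi_0$ as a Hecke eigenform in the geometric space $H^0(\mathscr{M}(\n,\m)_L,\underline{\omega}^\kappa)$ with $\pi_f \cong \pi_0$, for a level $\n$ satisfying the constraints of Definition \ref{Hilb}. For the mixed weights $(k_\tau)$ appearing here, including the partial weight one case where $k_\tau = 1$ for some $\tau$, this is how such forms are defined in Section \ref{Hilbsection}; once $\n$ has been enlarged to absorb the conductor of $\pi_0$ away from $vv'$ and to achieve the rigidification condition, the step is routine and everything downstream is a purely formal application of Proposition \ref{proplgc} together with strong multiplicity one.
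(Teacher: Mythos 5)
Your proposal is correct and follows essentially the same argument as the paper: reduce to the normalised $\pi_0$ from Section \ref{notred}, realise $\pi_0$ as an eigenform $f \in H^0(\mathscr{M}(\n,\q\lf)_L,\underline{\omega}^\kappa)$ with $\pi_f\cong\pi_0$, verify the four hypotheses of Proposition \ref{proplgc} (the last being the contradiction hypothesis that $\rho_v$ is unramified), invoke the proposition to produce $g$ at level prime to $\lf$, deduce $\pi_g\cong\pi_0$ by strong multiplicity one, and contradict the Steinberg hypothesis at $v$ because $\pi_{g,v}$ is spherical. Your extra paragraph ruling out the possibility that $g$ is Eisenstein via the irreducibility of $\overline{r}_{p,\iota}(\pi_0)$ is a useful elaboration of a point the paper leaves implicit (the construction behind Proposition \ref{proplgc}, via Theorem \ref{levellower} and completed cohomology, only ever produces cuspidal forms, but it is reasonable to make this explicit).
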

\begin{proof}
Recall that we have reduced the statement of Theorem \ref{mylgc} to the case of $\pi_0$ as described immediately before Section \ref{Hilbsection}. We denote the finite part of $\pi_0$ by $\pi_{0}^\infty$. For $\m$ any non-zero ideal of $\OO_F$ we define congruence subgroups $U_0(\m)$ and $U_1(\m)$ to be the subgroups of $\GL_2(\widehat{\Z})$ given by:
\begin{eqnarray*}
U_0(\m) &= &\{g \in \GL_2(\widehat{\Z}): g = \begin{pmatrix}
* & *\\0&*
\end{pmatrix} \hbox{ mod }\m \}\\
U_1(\m) &= &\{g \in \GL_2(\widehat{\Z}): g = \begin{pmatrix}
* & *\\0&1
\end{pmatrix} \hbox{ mod }\m \}
\end{eqnarray*}

Now we denote by $\lf$ and $\q$ the prime ideals of $\OO_F$ corresponding to the places $v$ and $v'$ respectively, and suppose that $\n$ is an ideal of $\OO_F$, coprime to $p\q\lf$ and divisible by a rational integer $\ge 4$, such that $(\pi_{0}^\infty)^{U_1(\n)\cap U_0(\q\lf)}\ne 0$. The isomorphism $\iota: \overline{\Q}_p \rightarrow \C$ induces a bijection between $\Sigma$ and the embeddings $\Sigma_p$ from $F \hookrightarrow L$, so using this we can associate a character \[\kappa: \mathrm{Res}^{\OO_F}_\Z \G_{m/\OO_L} \rightarrow \G_{m/\OO_L}\] to the $\Sigma$-tuple of integers $\{k_\tau\}_{\tau \in \Sigma}$ describing the Archimedean part of $\pi_0$. Now (possibly enlarging $L$) there is a Hecke eigenform $f \in H^0(\mathscr{M}(\n,\q\lf)_L,\underline{\omega}^{\kappa})$ with $\pi_f \cong \pi_0$, and so $f$ satisfies the hypotheses of Proposition $\ref{proplgc}$. This Proposition gives us a Hecke eigenform $g \in H^0(\mathscr{M}(\n,\q)_L,\underline{\omega}^{\kappa})$ with $\pi_g \cong \pi_0$, and so the space $(\pi_{0}^\infty)^{U_1(\n)\cap U_0(\q)}$ is non-zero, which contradicts the assumption that $\pi_{0,\lf}$ is a twist of Steinberg.
\end{proof}

We end this section by discussing Hilbert modular varieties with Iwahori level at $p$ and a definition of overconvergent Hilbert modular forms.

\begin{definition}
Let $\mathfrak{c}$ be a fractional ideal of $F$ such that $\mathbf{N}\mathfrak{c}$ is coprime to $p\m\n$. We denote by $\underline{\mathscr{M}}^{\mathfrak{c},\mathrm{Iw}}(\n,\m)$ the functor from schemes over $\OO_L$ to sets taking $S$ to isomorphism classes of tuples $(A,\lambda,[\eta],H)$ up to isomorphism, where $(A,\lambda,[\eta])$ are as in Definition \ref{Hilb} and $H$ is a finite flat subgroup scheme of $A[p]$, stable under the action of $\OO_F$, of rank $p^{[F:\Q]}$, and isotropic with respect to the $\lambda$-Weil pairing.
\end{definition}
The functor $\underline{\mathscr{M}}^{\mathfrak{c},\mathrm{Iw}}(\n,\m)$ is represented by an $\OO_L$-scheme ${\mathscr{M}}^{\mathfrak{c},\mathrm{Iw}}(\n,\m)$. We denote by ${\mathscr{M}}^{\mathrm{Iw}}(\n,\m)$ the disjoint union of these over suitable representatives of the narrow class group, as before.  

Denoting the rigid generic fibre of ${\mathscr{M}}^{\mathrm{Iw}}(\n,\m)$ by $M^{\mathrm{Iw}}(\n,\m)_L$ we can consider sections of the line bundles $\underline{\omega}^{\kappa}$ over strict neighbourhoods of the locus in $M^{\mathrm{Iw}}(\n,\m)_L$ where $H$ is Cartier dual to $\OO_F/p$ (i.e. the multiplicative ordinary locus), to obtain a space $M_\kappa^\dagger(\n,\m)$ of overconvergent modular forms of weight $\kappa$ (see \cite[\S 4]{kassaei}). 

\section{Eigenvarieties}\label{evars}
We will need to make use of eigenvarieties constructed in different contexts. To clarify the relationship between these eigenvarieties, we are going to follow the abstract approach of \cite{BC}. First we need to discuss the weight spaces over which our eigenvarieties will live.

\subsection{Weight spaces}
We denote by $G = \prod_{\pf|p}G_\pf = \mathrm{Res}_{F/\Q}(\GL_2)(\Q_p)$, where $G_\pf = \GL_2(F_\pf)$. We denote by $B = \prod_{\pf|p}B_\pf \subset G$ the Borel subgroup comprising upper triangular matrices, and denote by $T = \prod_{\pf|p}T_\pf$ the maximal torus comprising diagonal matrices. We denote by $N_\pf$ the subgroup of $B_\pf$ whose elements have $1$s on the diagonal. Finally, $T_0 = \prod_{\pf|p}T_{0,\pf}\subset T$ is the compact subgroup whose elements have integral entries.

Fix a finite extension $L\subset \overline{\Q}_p$ of $\Q_p$, which we assume contains a normal closure of $F$. The functor taking an $L$-affinoid $\Sp A$ to the set of continuous $A^\times$ valued characters of $T_0 = (\OO_F\otimes_\Z\Z_p)^\times \times (\OO_F\otimes_\Z\Z_p)^\times$ is representable by a rigid analytic space $\widehat{T}_0$ over $L$ \cite[\S2]{Bu1}. Likewise, we have a rigid space $\widehat{T}$ representing continuous characters of $T$.

\begin{definition}
Denote by $\mathscr{W}$ the subspace of $\widehat{T}_0$ whose points correspond to continuous characters of $T_0$ which are trivial on a finite index subgroup of $\OO_F^\times$ (embedded diagonally in $T_0$).
\end{definition}

Suppose we have an algebraic character $\kappa: \mathrm{Res}^{\OO_F}_{\Z}\G_m/\OO_L \rightarrow \G_m/\OO_L$ such that $\mathbf{N}^w\cdot\kappa^{-1}$ admits a square root admits a square root. Recall that $\Sigma_p$ denotes the set of embeddings from $F$ to $L$, so $\kappa$ corresponds to a $\Sigma$-tuple of integers $\{k_{\tau}\}_{\tau\in\Sigma}$, with the same parity as $w$.

The character \[(t_1,t_2) \mapsto \prod_{\tau\in\Sigma_p}\tau(t_1)^{\frac{k_\tau-2-w}{2}}\tau(t_2)^{\frac{-w-k_\tau+2}{2}}\] is a point in $\mathscr{W}(L)$, which we also denote by $\kappa$. 

\begin{definition}
Denote by $\mathscr{W}_\kappa$ the subspace of $\mathscr{W}$ such that maps from an $L$-affinoid $\Sp A$ to $\mathscr{W}_\kappa$ correspond to characters $(\chi_1,\chi_2): (\OO_F\otimes_\Z\Z_p)^\times \times (\OO_F\otimes_\Z\Z_p)^\times \rightarrow A^\times$ such that $\chi_1\chi_2 = \Nm^{-w}$ and $\chi_1\chi_2^{-1} =  \kappa\Nm^{-2}\cdot(\tau\circ \Nm)$, where $\tau$ is a continuous character $\tau:\Z_p^\times\rightarrow A^\times$ with \[v_p(1-\tau(a)) > \frac{1}{p-1}\] for all $a \in \Z_p^\times$. Here $\Nm$ is the natural extension of the norm map to a continuous map $(\OO_F\otimes_\Z\Z_p)^\times\rightarrow \Z_p^\times$.
\end{definition}

Taking $\tau$ to be the trivial character, we see that $\kappa$ is a point of $\mathscr{W}_\kappa$. In fact, $\mathscr{W}_\kappa$ is a small (one-dimensional) open disc in $\mathscr{W}$, with centre $\kappa$.

\begin{remark}
The weight space $\mathscr{W}$ is isomorphic to the weight space (also denoted by $\mathscr{W}$) defined by Buzzard in \cite[\S 8]{Bu2}. Our character $(\chi_1,\chi_2)$ of $(\OO_F\otimes_\Z\Z_p)^\times \times (\OO_F\otimes_\Z\Z_p)^\times$ corresponds to the character $(n,v) = (\chi_1\chi_2^{-1},\chi_2)$ in Buzzard's weight space. 

The weight space $\mathscr{W}_{\kappa}$ is isomorphic to the weight space (also denoted by $\mathscr{W}_\kappa$) defined by Kisin and Lai in \cite[\S 4.5]{KL}
\end{remark}

We are going to build eigenvarieties interpolating classical Hilbert modular forms over the weight space $\mathscr{W}_\kappa$. These eigenvarieties will be constructed using three different notions of `overconvergent automorphic form'. 
\subsection{Refinements}
For $\pf|p$, denote by $I_\pf$ the Iwahori subgroup of $G_\pf$, comprising matrices which reduce to upper triangular matrices mod $\varpi_\pf$. We write $I$ for the product $\prod_{\pf|p}I_\pf \subset G$.

\begin{definition}
Let $\pi_\pf$ be an irreducible smooth representation of $G_\pf$ on a complex vector space which is either irreducible principal series or a twist of the Steinberg representation. An \emph{accessible refinement} of $\pi_\pf$ is a character $\chi$ of $T_\pf$ such that there is a $G_\pf$-equivariant embedding \[\pi_\pf \hookrightarrow \mathrm{Ind}_{B_\pf}^{G_\pf}\chi.\] 
\end{definition}

\begin{remark}\label{examples} We have the following explicit description of accessible refinements: we have $\pi_\pf \cong \mathrm{Ind}_{B_\pf}^{G_\pf}\mu_1\otimes\mu_2$ or $\pi_\pf \cong \mathrm{St}\otimes\mu$, where $\mu, \mu_i$ are characters of $F_{\pf}^\times$.
\begin{itemize}
\item Suppose $\pi_\pf \cong \mathrm{Ind}_{B_\pf}^{G_\pf}\mu_1\otimes\mu_2$. Then the accessible refinements of $\pi_pf$ are the characters $\mu_1\otimes\mu_2$ and $\mu_2\otimes\mu_1$. Note that if $\mu_1 = \mu_2$ these refinements are the same.
\item Suppose $\pi_\pf \cong \mathrm{St}\otimes\mu$. Then $\pi_\pf$ is isomorphic to the unique irreducible subrepresentation of the normalised induction $\mathrm{Ind}_{B_\pf}^{G_\pf}\mu|\cdot|_\pf^{1/2}\otimes\mu|\cdot|_\pf^{-1/2}$ and the unique accessible refinement of $\pi_\pf$ is $\mu|\cdot|_\pf^{1/2}\otimes\mu|\cdot|_\pf^{-1/2}$.
\end{itemize}
\end{remark}
For an ideal $\n$ of $\OO_F$, coprime to $p$, we denote by $\cH(\n)$ the free commutative polynomial ring over $\Z$ on generators labelled $T_v$ and $S_v$ for places $v$ of $F$ not dividing $p\n$ and $U_v$ for places $v|\n$.

For $\m,\n$ ideals of $\OO_F$ (coprime to each other and to $p$), suppose $\pi$ is a cuspidal automorphic representation of $\GL_2(\A_F)$ with $(\pi^{\infty p})^{U_1(\n)\cap U_0(\m)} \ne 0$ for some $n$ and $\pi_{\tau}=D_{k_\tau,w}$ for each $\tau\in \Sigma$, with $k_\tau \ge 2$ and $k_\tau = w$ mod $2$ for each $\tau$. We say that such a $\pi$ is a classical automorphic representation of tame level $U_1(\n)\cap U_0(\m)$ and weight $(k,w)$. We moreover say that $\pi$ has finite slope if the smooth Jacquet modules $J_{B_\pf}(\pi_\pf)$ are non-zero for all $\pf|p$. Equivalently, the $\pi_\pf$ are all irreducible principal series or twists of Steinberg (since local factors of a cuspidal automorphic representation are generic). 

\begin{definition}
Suppose $\pi$ is a finite slope representation. An \emph{accessible refinement} of $\pi$ is a character \[\chi = \bigotimes_{\pf|p}\chi_{\pf}:T \rightarrow  \C^{\times}\] such that each $\chi_\pf$ is an accessible refinement of $\pi_\pf$.
\end{definition}
\begin{remark}
The possible accessible refinements of finite slope automorphic representations are completely classified by Remark \ref{examples}.
\end{remark}
\begin{definition}
We say that a finite slope representation is \emph{unramified} if $\pi_\pf$ is either an unramified principal series or unramified twist of Steinberg for all $\pf|p$. 
\end{definition}
\begin{remark}
The following two conditions are easily seen to be equivalent to $\pi$ being unramified:
\begin{itemize}
\item For every $\pf|p$, $\pi_\pf$ has non-zero invariants for the Iwahori subgroup $I_\pf$.
\item Every accessible refinement of $\pi$ factors through $T/T_0$.
\end{itemize}
\end{remark}
\begin{definition}
Suppose $\pi$ is a finite slope representation of weight $(k,w)$, and that $\chi$ is an accessible refinement of $\pi$. We define a continuous character \[\nu(\pi,\chi): T \rightarrow \overline{\Q}_p^\times\] by \[\nu(\pi,\chi) = \bigotimes_{\pf|p} \iota^{-1}\chi_\pf|\cdot|_\pf^{1/2} \otimes |\cdot|_\pf^{-1/2}\prod_{\tau\in \Sigma_\pf}\tau^{(k_\tau-2-w)/2}\otimes\tau^{(-w-k_\tau+2)/2}.\]
\end{definition}
\begin{remark}
Our discussion of accessible refinements and the definition of the character $\nu(\pi,\chi)$ is parallel to that of \cite[\S 1.4]{ChenFamGal}. We will see in section \ref{emerton} that these definitions are completely natural when constructing eigenvarieties using completed cohomology and Emerton's locally analytic Jacquet functor.
\end{remark}

\subsection{Abstract eigenvarieties}
\begin{definition}
Let $\mathscr{W}'$ be any subspace of $\mathscr{W}$ which is an admissible increasing union of open affinoids (in practice it will be $\mathscr{W_\kappa}$). Let $\mathscr{H}$ be a commutative $\Z$-algebra and let $\mathscr{Z}$ be a subset of $\Hom(\mathscr{H},\overline{\Q}_p) \times \widehat{T}(\overline{\Q}_p)$ whose image in $\widehat{T}_0(\overline{\Q}_p)$ is an accumulation\footnote{A set is accumulation if each point $z \in Z$ has a basis of affinoid neighbourhoods $U$ such that $Z\cap U$ is Zariski dense in $U$.} and Zariski dense subset of $\mathscr{W}'$. Denote by $Y$ the fibre product of $\widehat{T}$ and $\mathscr{W}'$ over $\widehat{T}_0$. Then an \emph{eigenvariety} for the triple $(\mathscr{H},\mathscr{Z},\mathscr{W}')$ is a reduced rigid space $X$ over $L$ equipped with
\begin{itemize}
\item A ring homomorphism $\psi: \mathscr{H}\rightarrow\OO(X)$
\item A finite morphism $\nu: X \rightarrow Y$
\item An accumulation and Zariski dense subset $Z\subset X(\overline{\Q}_p)$ (which we refer to as the `classical subset')
\end{itemize}
such that the following are satisfied
\begin{enumerate}
\item For all open affinoids $V \subset Y$ the natural map
\[\psi\otimes\nu^*:\mathscr{H}\otimes\OO(V)\rightarrow\OO(\nu^{-1}(V))\] is surjective.
\item The natural evaluation map \begin{eqnarray*} X(\overline{\Q}_p) &\rightarrow &\Hom(\mathscr{H},\overline{\Q}_p)\\
x &\mapsto & \psi_x:=(h\mapsto \psi(h)(x))
\end{eqnarray*}
induces a bijection $z \mapsto (\psi_z,\nu(z))$ from $Z$ to $\mathscr{Z}$.
\end{enumerate}
\end{definition}
The key property of the above definition is that an eigenvariety is unique up to unique isomorphism, by \cite[Proposition 7.2.8]{BC}.
\begin{remark}\label{remgalalt}
An alternative way to abstractly characterise eigenvarieties in our context is as Zariski closures of sets of classical points in the rigid space given by the product of the rigid generic fibre of a Galois (pseudo)~deformation ring and some affine spaces or copies of $\G_m$ (to keep track of additional Hecke eigenvalues).
\end{remark}

For $\m,\n$ ideals of $\OO_F$ (coprime to each other and to $p$), suppose $\pi$ is an unramified representation of tame level $U_1(\n)\cap U_0(\m)$.

There is a natural action of $\cH(\m\n)$ on $(\pi^\infty)^{U_1(\n)\cap U_0(p\m)}$, where we let $T_v$ and $U_v$ act by double coset operators \[[U\begin{pmatrix}\varpi_v & 0\\0 & 1 \end{pmatrix}U]\] and let $S_v$ act by the double coset operator  \[[U\begin{pmatrix}\varpi_v & 0\\0 & \varpi_v \end{pmatrix}U]\] where $U = U_1(\n)\cap U_0(p\m)$.

Given such a $\pi$, we obtain a subset $\mathscr{Z}(\n,\m)_{\pi}$ of $\Hom(\cH(\m\n),\overline{\Q}_p)\times \widehat{T}(\overline{\Q}_p)$ by taking pairs \[(\psi,\nu(\pi,\chi))\] where $\psi$ is a character corresponding (via $\iota$) to a Hecke eigenform in $(\pi^\infty)^{U_1(\n)\cap U_0(p\m)}$ and $\chi$ is an accessible refinement of $\pi$. Note that any element of the set $\mathscr{Z}(\n,\m)_{\pi}$ determines $\pi$, by strong multiplicity one. The choice of accessible refinement $\chi$ corresponds to a choice of $U_\pf$-eigenvalue in the space $(\pi_\pf)^{I_\pf}$ for each $\pf|p$. The character $\kappa_\pi:= \nu(\pi,\chi)|_{T_0}$ is independent of the refinement $\chi$.

\begin{definition}
Let $\m$ and $\n$ be a pair of coprime ideals in $\OO_F$, both coprime to $p$. 
\begin{enumerate}
\item Denote by $\mathscr{Z}(\n,\m)$ the union of the $\mathscr{Z}(\n,\m)_{\pi}$ obtained from unramified $\pi$ with tame level $U_1(\n)\cap U_0(\m)$ such that $\kappa_\pi \in \mathscr{W}_\kappa$.

\item For $\q$ a prime divisor of $\m$, we write $\mathscr{Z}(\n,\m)^{\q\mhyphen sp}$ for the subset of $\mathscr{Z}(\n,\m)$ arising from $\pi$ with $\pi^{U_1(\n)\cap U_0(p\m/\q)} = 0$ (equivalently, the local factor $\pi_{\q}$ is an unramified twist of the Steinberg representation).

\item Similarly, we write $\mathscr{Z}(\n,\m)^{\q\mhyphen ps}$ for the subset of $\mathscr{Z}(\n,\m)$ arising from $\pi$ with $\pi^{U_1(\n)\cap U_0(p\m/\q)} \ne 0$ (equivalently, the local factor $\pi_{\q}$ is an unramified principal series representation).

\item Now we fix the irreducible mod $p$ Galois representation $\rhobar = \overline{r}_{p,\iota}(\pi_f).$ Then we can define $\mathscr{Z}(\n,\m)_{\rhobar}$, $\mathscr{Z}(\n,\m)^{\q\mhyphen sp}_{\rhobar}$ and $\mathscr{Z}(\n,\m)^{\q\mhyphen ps}_{\rhobar}$ to be the subsets of $\mathscr{Z}(\n,\m)$ etc. arising from those $\pi$ with residual Galois representation $\overline{r}_{p,\iota}(\pi)$ isomorphic to $\rhobar$. 
\end{enumerate}
\end{definition} 

\subsection{Buzzard's eigenvarieties}

\begin{theorem}\label{buzeig}
There exist eigenvarieties $\mathscr{E}(\n,\m)_{\rhobar}$, $\mathscr{E}(\n,\m)^{\q\mhyphen sp}_{\rhobar}$ and $\mathscr{E}(\n,\m)^{\q\mhyphen ps}_{\rhobar}$ for the triples \[(\cH(\m\n),\mathscr{Z}(\n,\m)_{\rhobar},\mathscr{W}_\kappa), (\cH(\m\n),\mathscr{Z}(\n,\m)^{\q\mhyphen sp}_{\rhobar},\mathscr{W}_\kappa), (\cH(\m\n),\mathscr{Z}(\n,\m)^{\q\mhyphen ps}_{\rhobar},\mathscr{W}_\kappa).\] 

We denote the classical subsets of these eigenvarieties by $Z(\n,\m)_{\rhobar}$, ${Z}(\n,\m)^{\q\mhyphen sp}_{\rhobar}$ and ${Z}(\n,\m)^{\q\mhyphen ps}_{\rhobar}$. The following properties are satisfied by these eigenvarieties:
\begin{itemize}
\item There are closed immersions \[\mathscr{E}(\n,\m)^{\q\mhyphen sp}_{\rhobar}\hookrightarrow\mathscr{E}(\n,\m)_{\rhobar}\] and \[\mathscr{E}(\n,\m)^{\q\mhyphen ps}_{\rhobar}\hookrightarrow\mathscr{E}(\n,\m)_{\rhobar}\] commuting with the maps to weight space and respecting the homomorphisms $\psi$ from $\cH(\m\n)$, with images (respectively $X$, $Y$) given by unions of irreducible components (in the sense of \cite{con}). 
\item Each irreducible component of $\mathscr{E}(\n,\m)_{\rhobar}$ is contained in precisely one of $X$ and $Y$. 
\item We have $Z(\n,\m)_{\rhobar}\cap X = {Z}(\n,\m)^{\q\mhyphen sp}_{\rhobar}$ and $Z(\n,\m)_{\rhobar}\cap Y = {Z}(\n,\m)^{\q\mhyphen ps}_{\rhobar}.$
\item There is a map \[\mathscr{E}(\n,\m)^{\q\mhyphen ps}_{\rhobar} \rightarrow \mathscr{E}(\n,\m/\q)_{\rhobar},\] surjective on closed points, for which the pre-image of a closed point $x \in \mathscr{E}(\n,\m/\q)_{\rhobar}$ is indexed by the roots of the Hecke polynomial $X^2-\psi(T_\q)(x)X + \mathbf{N}{\q}\psi(S_\q)(x)$.
\end{itemize}
\end{theorem}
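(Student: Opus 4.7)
The plan is to apply Buzzard's eigenvariety construction \cite{Bu2} to spaces of overconvergent $p$-adic automorphic forms on a definite quaternion algebra. Since $[F:\Q]$ is even, fix $D/F$ a totally definite quaternion algebra split at every finite place, so that $(D\otimes\A_F^\infty)^\times \cong \GL_2(\A_F^\infty)$. For each tame level $U_1(\n)\cap U_0(\m)$, Buzzard's machine produces a reduced rigid space together with a finite morphism to weight space and a Zariski dense, accumulation set of classical points; by Jacquet--Langlands these correspond, at classical weights with all $k_\tau \ge 2$, precisely to cuspidal $\GL_2$-automorphic representations of tame level $U_1(\n)\cap U_0(\m)$ equipped with accessible refinements at each $\pf|p$, i.e.\ to the set $\mathscr{Z}(\n,\m)$. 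Pulling back over the open subspace $\mathscr{W}_\kappa \subset \mathscr{W}$ yields a candidate for $\mathscr{E}(\n,\m)$; properties (1) and (2) of the eigenvariety definition follow from Buzzard's construction together with classicality of small slope overconvergent forms at regular weights.

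To pass to $\mathscr{E}(\n,\m)_{\rhobar}$, I would interpolate traces of Frobenius conjugacy classes to produce a continuous two-dimensional pseudocharacter $T\colon G_F\to\OO(\mathscr{E}(\n,\m))$. Because $\rhobar$ is absolutely irreducible, the locus where the reduction of $T$ equals $\operatorname{tr}\rhobar$ is open and closed, hence a union of connected components, and I take $\mathscr{E}(\n,\m)_{\rhobar}$ to be this union; \cite[Proposition 7.2.8]{BC} then guarantees the resulting structure is the unique eigenvariety for $(\cH(\m\n),\mathscr{Z}(\n,\m)_{\rhobar},\mathscr{W}_\kappa)$.

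For the sp/ps decomposition, define $X$ (resp.\ $Y$) as the Zariski closure in $\mathscr{E}(\n,\m)_{\rhobar}$ of $Z(\n,\m)^{\q\mhyphen sp}_{\rhobar}$ (resp.\ $Z(\n,\m)^{\q\mhyphen ps}_{\rhobar}$); accumulation of classical points ensures these are unions of irreducible components. At any classical point the local factor $\pi_\q$ is unramified, hence either principal series or a twist of Steinberg but not both, so the two classical sets do not meet. The level-raising map comes from the classical degeneracy on $D^\times$-forms: a newform at level $\m/\q$ has two Iwahori-level oldforms at level $\m$, with $U_\q$-eigenvalues the roots of the Hecke polynomial $X^2-\psi(T_\q)X+\mathbf{N}\q\,\psi(S_\q)$. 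Interpolating this across the eigenvariety (using finiteness over weight space and Zariski density of classical points to pin down the morphism) produces the required finite map $\mathscr{E}(\n,\m)^{\q\mhyphen ps}_{\rhobar}\to\mathscr{E}(\n,\m/\q)_{\rhobar}$ with the stated fibres.

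The main obstacle will be showing that $X$ and $Y$ together exhaust the irreducible components of $\mathscr{E}(\n,\m)_{\rhobar}$, rather than cutting across them. The point is that the sp condition at $\q$ is given by a closed analytic condition: the Steinberg $U_\q$-eigenvalue is determined by the central character, which is itself an analytic function on the eigenvariety (being determined by the composition with the weight map together with the $S_v$-eigenvalues for $v\nmid p\m\n$ via the Hecke algebra). On any irreducible component this condition must either hold identically -- forcing the component into $X$ -- or define a proper Zariski closed subvariety. In the latter case, Zariski density of classical points on the component, together with mutual exclusivity of sp and ps at each classical point, forces all but a thin subset of classical points to be ps, placing the component in $Y$.
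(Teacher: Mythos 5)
Your overall architecture matches the paper's: construct $\mathscr{E}(\n,\m)$ via Buzzard's eigenvariety machine for overconvergent automorphic forms on a totally definite quaternion algebra split at all finite places, with Zariski density and accumulation of the classical set supplied by a small-slope classicality criterion (the paper cites \cite[Theorem 3.9.6]{L}); cut out $\mathscr{E}(\n,\m)_{\rhobar}$ as a union of connected components using residual Hecke eigenvalues/pseudocharacter; define $\mathscr{E}(\n,\m)^{\q\mhyphen sp}_{\rhobar}$ and $\mathscr{E}(\n,\m)^{\q\mhyphen ps}_{\rhobar}$ as Zariski closures of the corresponding classical sets; and get the map to $\mathscr{E}(\n,\m/\q)_{\rhobar}$ by interpolating the degeneracy maps --- the paper phrases this via uniqueness of abstract eigenvarieties, identifying $\mathscr{E}(\n,\m)^{\q\mhyphen ps}_{\rhobar}$ with the nilreduction of a relative spectrum over $\mathscr{E}(\n,\m/\q)_{\rhobar}$, which amounts to the same thing.

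The genuine gap is in the second and third bullets: that $X$ and $Y$ are unions of irreducible components, meeting $Z(\n,\m)_{\rhobar}$ exactly in $Z^{\q\mhyphen sp}_{\rhobar}$ and $Z^{\q\mhyphen ps}_{\rhobar}$. Your proposed mechanism is a closed analytic condition relating $\psi(U_\q)$ to the central character. Even granting that such a function $h$ exists on $\mathscr{E}(\n,\m)_{\rhobar}$ with $h(z)=0$ if and only if $z$ is $\q$-special at every \emph{classical} point, the dichotomy you run does not prove what is needed. On a component $W$ with $h\not\equiv 0$, you conclude that all but a thin set of classical points are ps, hence $W\subset Y$; but you have not ruled out that the thin set $W\cap\{h=0\}$ contains a $\q$-special classical point $x$. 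If it did, then $x$ lies in $X$ (as $X$ is the Zariski closure of $Z^{\q\mhyphen sp}_{\rhobar}$), so $X\cap W$ would be a nonempty proper Zariski-closed subset of $W$, and $X$ would fail to be a union of irreducible components; moreover $x$ would violate $Z\cap Y=Z^{\q\mhyphen ps}_{\rhobar}$. Nothing in your argument, which only sees Hecke eigenvalues, forbids a one-parameter family of principal series degenerating to a Steinberg representation at an isolated classical point. You also assert that ``accumulation of classical points ensures these are unions of irreducible components,'' but the accumulation of $Z^{\q\mhyphen sp}_{\rhobar}$ in its own Zariski closure is itself one of the properties one must prove so that the Zariski closure is an eigenvariety at all --- it cannot be assumed.

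The paper closes this gap with an essentially unavoidable input you do not have: the family of Weil--Deligne representations at $\q$ carried by the eigenvariety, as in \cite[\S 7.8]{BC} (see also \cite{Paulinlg}). One \emph{defines} $X$ to be the union of irreducible components on which the associated monodromy operator at $\q$ is generically nonzero. If $x$ is a $\q$-special classical point then local--global compatibility at $\q$ for $\pi_x$ \cite{Car} (known here since all $k_\tau\ge 2$) gives $N\ne 0$ at $x$, and \cite[Proposition 7.8.19(iii)]{BC} then forces \emph{every} irreducible component through $x$ to be generically special. This ``monodromy can only drop under generization'' principle, resting on local constancy of generic $L$-factors along the family, is precisely what rules out the bad configuration above, and it has no substitute in terms of closed Hecke-eigenvalue conditions alone. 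The converse inclusion $X\cap Z\subset Z^{\q\mhyphen sp}_{\rhobar}$ is then handled by noting that a generically special component forces the Weil--Deligne representation at a classical point to have the shape $(W\oplus W(1),N)$, which excludes the possibility of a one-dimensional local factor since local factors of cuspidal $\pi$ are infinite-dimensional.
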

\begin{proof}We fix a definite quaternion algebra $D/F$, ramified precisely at the infinite places of $F$ and an isomorphism $(D\otimes_F \A_F^\infty)^\times \cong \GL_2(\A_F^\infty)$. Buzzard's definition \cite[Part III]{Bu2} of overconvergent automorphic forms on $D$,  with tame level $U_1(\n)\cap U_0(\m)$ allows us to construct an eigenvariety $\mathscr{E}(\n,\m)$ for the triple $(\cH(\m\n),\mathscr{Z}(\n,\m),\mathscr{W}_\kappa)$. The Zariski density and accumulation properties for the classical points follows from a special case of the classicality criterion given by \cite[Theorem 3.9.6]{L}. To obtain the eigenvariety for $(\cH(\m\n),\mathscr{Z}(\n,\m)_{\rhobar},\mathscr{W}_\kappa)$ we just take the union of connected components in $\mathscr{E}(\n,\m)$ whose closed points have associated residual Galois representation isomorphic to $\rhobar$.

We then define $\mathscr{E}(\n,\m)^{\q\mhyphen sp}_{\rhobar}$ to be the Zariski closure in $\mathscr{E}(\n,\m)_{\rhobar}$ of the subset $Z(\n\m)_{\rhobar}^{\q\mhyphen sp}\subset Z(\n\m)_{\rhobar}$ corresponding to systems of Hecke eigenvalues in $\mathscr{Z}(\n,\m)_{\rhobar}^{\q\mhyphen sp}$. Similarly, we define $\mathscr{E}(\n,\m)^{\q\mhyphen ps}_{\rhobar}$ to be the Zariski closure in $\mathscr{E}(\n,\m)_{\rhobar}$ of the subset $Z(\n\m)_{\rhobar}^{\q\mhyphen ps}$. We now need to check that $Z(\n\m)_{\rhobar}^{\q\mhyphen sp}$ and $Z(\n\m)_{\rhobar}^{\q\mhyphen ps}$ are accumulation subsets in their Zariski closures, together with the rest of the assertions of the Theorem. 

We can deduce everything we need by applying the results of \cite[7.8]{BC} on the family of Weil--Deligne representations carried by an eigenvariety (see also \cite{Paulinlg}). We proceed as follows: denote by $X \subset\mathscr{E}(\n,\m)_{\rhobar}$ the reduced closed subspace given by the union of irreducible components where the monodromy operator in the associated family of Weil--Deligne representations is generically non--zero  --- we call such a component `generically special'. More precisely, in the notation of \cite[7.8]{BC} a generically special component $W$ is one which for all closed points $x \in W$ we have $N_{s(x)}^{\mathrm{gen}}$ non-zero for $s(x)$ any germ of an irreducible component at $x$ which is contained in $W$. Then we claim that \[X\cap Z(\n\m)_{\rhobar} = Z(\n\m)_{\rhobar}^{\q\mhyphen sp}.\] Indeed, if $x \in Z(\n\m)_{\rhobar}^{\q\mhyphen sp}$ then, by \cite[Proposition 7.8.19 (iii)]{BC} and local--global compatibility at $\q$ for the automorphic representation $\pi_x$ \cite{Car}, every irreducible component passing through $x$ is generically special. 

Conversely, if $x \in X\cap Z(\n\m)_{\rhobar}$ then the Weil--Deligne representation at $\q$ associated to $\rho_x$ is forced to have the form $(W\oplus W(1),N)$, where $W$ is a one--dimensional $\overline{k(x)}$-vector space with an unramified action of the Weil group $W_\q$ and $W(1)$ denotes the twist of $W$ by the $p$-adic cyclotomic character ($N$ could be zero or non--zero). This means that either $x \in Z(\n\m)_{\rhobar}^{\q\mhyphen sp}$ or the local factor $\pi_\q$ of the automorphic representation $\pi$ associated to $x$ is one--dimensional. The latter situation cannot occur, since $\pi$ is a cuspidal automorphic representation of $\GL_2(\A_F)$ and therefore its local factors are infinite--dimensional (this follows from the existence of a global Whittaker model, for example see the proof of Theorem 11.1, \cite{JL}). 

Now it is easy to deduce the accumulation property for $Z(\n\m)_{\rhobar}^{\q\mhyphen sp}$ from the accumulation property for $Z(\n\m)_{\rhobar}$.

Similar arguments apply if we take $Y \subset\mathscr{E}(\n,\m)_{\rhobar}$ to be the reduced closed subspace given by the union of irreducible components where the monodromy operator in the associated family of Weil--Deligne representations is generically zero  --- we call such a component `generically principal series'.

Finally we need to construct the map \[\mathscr{E}(\n,\m)^{\q\mhyphen ps}_{\rhobar} \rightarrow \mathscr{E}(\n,\m/\q)_{\rhobar}.\] This can be obtained by giving an alternate construction of $\mathscr{E}(\n,\m)^{\q\mhyphen ps}_{\rhobar}$ --- indeed, by the uniqueness of abstract eigenvarieties, $\mathscr{E}(\n,\m)^{\q\mhyphen ps}_{\rhobar}$ coincides with the nilreduction of the covering of $\mathscr{E}(\n,\m/\q)_{\rhobar}$ given by the roots of $X^2-\psi(T_\q)X + \mathbf{N}{\q}\psi(S_\q)(x)$ (this has a natural interpretation as a relative spectrum over $\mathscr{E}(\n,\m/\q)_{\rhobar}$). 
\end{proof}
\subsection{Kisin and Lai's eigenvarieties}\label{KLeig}
Now we let $f$ be a Hecke eigenform as in the statement of Proposition \ref{proplgc}. For each prime ideal $\pf|p$ we denote the two distinct roots of $X^2-\theta_f(T_\pf)X + (\mathbf{N}\pf)^{1-w}\iota^{-1}\chi_f(\varpi_\pf)$ by $\alpha_\pf$ and $\beta_\pf$. Then for each subset  $S \subset \{\pf|p\}$ there is a unique Hecke eigenform \[f_S \in H^0({\mathscr{M}}^{\mathrm{Iw}}(\n,\q\lf)_L,\underline{\omega}^\kappa)\] whose Hecke eigenvalues away from $p$ are the same as $f$ and for $\pf|p$ we have $\theta_{f_S}(U_\pf) = \alpha_{\pf}$ if $\pf \in S$ and $\theta_{f_S}(U_\pf) = \beta_{\pf}$ if $\pf \notin S$.

We moreover define a point $\nu_S \in \widehat{T}(\overline{\Q}_p)$ to be given by the character \[\bigotimes_{\pf|p} \chi_{\pf,1}\otimes\chi_{\pf,2}\prod_{\tau\in \Sigma_\pf}\tau^{(k_\tau-2-w)/2}\otimes\tau^{(-w-k_\tau+2)/2}\] where the $\chi_{\pf,i}$ are characters of $F_\pf^\times/\OO_\pf^{\times}$ defined by \begin{itemize}
\item $\chi_{\pf,1}(\varpi_\pf) = \alpha_\pf(\mathbf{N}\pf)^{-1}$ if $\pf \in T$ and $\chi_{\pf,1}(\varpi_\pf) = \beta_\pf(\mathbf{N}\pf)^{-1}$ otherwise
\item $\chi_{\pf,2}(\varpi_\pf) = \beta_\pf$ if $\pf \in T$ and $\chi_{\pf,1}(\varpi_\pf) = \alpha_\pf$ otherwise.
\end{itemize}

\begin{proposition}\label{KLapprox}
For each $S\subset\{\pf|p\}$ there is a point $x_{S}$ of $\mathscr{E}(\n,\q\lf)_{\rhobar}$ such that \[\nu(x_{S}) = \nu_{S},\] and the character \[\psi_{x_{S}}: \mathscr{H}(\n\q\lf)\rightarrow\overline{\Q}_p \] induced by $\psi$ is equal to $\theta_{f_{S}}$.
\end{proposition}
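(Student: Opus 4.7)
The plan is to construct $x_S$ by transferring a point from Kisin and Lai's eigenvariety \cite{KL}, which directly parametrises geometric overconvergent Hilbert modular forms (and therefore \emph{does} see the partial-weight-one form $f_S$ as a classical point), to Buzzard's eigenvariety $\mathscr{E}(\n,\q\lf)_{\rhobar}$ via the abstract uniqueness theorem.

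First I would invoke \cite[\S 4]{KL} to produce an eigenvariety $\mathscr{E}^{KL}$ over $\mathscr{W}_\kappa$ whose classical points come from finite-slope overconvergent Hilbert modular eigenforms of tame level $U_1(\n)\cap U_0(\q\lf)$ and Iwahori level at $p$. Each $p$-stabilisation $f_S$ is an overconvergent Hecke eigenform of finite slope (non-vanishing of its $U_\pf$-eigenvalues being guaranteed by the distinctness hypothesis on the roots of the Hecke polynomial), and so contributes a classical point $y_S$ of $\mathscr{E}^{KL}$. By construction $\psi_{y_S}=\theta_{f_S}$, and tracing through the definitions of the accessible refinement $\chi=\bigotimes_\pf \chi_{\pf,1}\otimes\chi_{\pf,2}$ corresponding to the choice of $U_\pf$-eigenvalue prescribed by $S$ gives $\nu(y_S)=\nu_S$ --- this is just a bookkeeping exercise with the Hecke-normalised local Langlands correspondence and the explicit formula for $\nu(\pi,\chi)$.

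Next I would identify $\mathscr{E}^{KL}$ with Buzzard's eigenvariety. Both are reduced rigid spaces over $\mathscr{W}_\kappa$ equipped with a finite map to $Y=\widehat{T}\times_{\widehat{T}_0}\mathscr{W}_\kappa$ and a homomorphism from $\cH(\n\q\lf)$, and on both the subset of classical points coming from \emph{cohomological} Hilbert modular eigenforms (all $k_\tau\ge 2$) is Zariski dense and accumulating --- on the Kisin--Lai side by their classicality criterion, on the Buzzard side by construction. Since $[F:\Q]$ is even, the classical Jacquet--Langlands correspondence gives a bijection between these cohomological classical systems on the two sides, matching Hecke eigensystems and accessible refinements. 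By the uniqueness of eigenvarieties \cite[Proposition 7.2.8]{BC} applied to this common Zariski dense classical subset, there is a canonical isomorphism $\mathscr{E}^{KL}\cong \mathscr{E}(\n,\q\lf)$; restricting to the components on which the residual Galois representation is $\rhobar$ yields $\mathscr{E}^{KL}_{\rhobar}\cong \mathscr{E}(\n,\q\lf)_{\rhobar}$, and the image of $y_S$ under this isomorphism is the desired point $x_{S}$.

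The main obstacle is the verification that the Kisin--Lai point $y_S$ really carries the refinement data $\nu_S$ written down in the statement (i.e.\ checking that the shift by $|\cdot|_\pf^{\pm 1/2}$ hidden in the definition of $\nu(\pi,\chi)$, together with the convention for the $U_\pf$-operator on geometric overconvergent forms, combine to give the formulas $\chi_{\pf,1}(\varpi_\pf)=\alpha_\pf(\mathbf{N}\pf)^{-1}$ and $\chi_{\pf,2}(\varpi_\pf)=\beta_\pf$ for $\pf\in S$). This is essentially routine but requires careful comparison of normalisations across the three incarnations of overconvergent automorphic forms used in the paper. A secondary, smaller point is to record that Kisin and Lai's construction does produce an eigenvariety in the strict Bellaïche--Chenevier sense, which follows from the classicality theorem of \cite{KL} together with the fact that cohomological classical weights are Zariski dense and accumulating in $\mathscr{W}_\kappa$.
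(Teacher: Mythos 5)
Your strategy matches the paper's: identify Buzzard's eigenvariety $\mathscr{E}(\n,\q\lf)_{\rhobar}$ with the Kisin--Lai eigenvariety $\mathcal{C}_\kappa(\mathfrak{m})$ (via the abstract uniqueness theorem of Bellaïche--Chenevier, with the classical points matched through the classical Jacquet--Langlands correspondence), and then read off $x_S$ from the overconvergent $p$-stabilisations $f_S$ which are visibly points of $\mathcal{C}_\kappa(\mathfrak{m})$.

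There is, however, a genuine gap in how you propose to verify that $\mathcal{C}_\kappa(\mathfrak{m})$ satisfies the Bellaïche--Chenevier eigenvariety axioms, specifically that the cohomological classical points are Zariski dense and accumulating. You attribute this to ``the classicality theorem of \cite{KL}'' and ``their classicality criterion'', but the classicality result proved in \cite{KL} (small-slope classicality in fixed weight) is nowhere near strong enough for this purpose: one needs classicality with a slope bound that grows with the weight, uniformly enough to deduce accumulation. This is exactly the content of the much more recent classicality criteria of Pilloni--Stroh \cite[Th\'eor\`eme 1.2]{PS-class} and Tian--Xiao \cite[Proposition 6.3]{TX}, which the paper cites at this point. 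Without those inputs the identification of $\mathcal{C}_\kappa(\mathfrak{m})$ with $\mathscr{E}(\n,\q\lf)_{\rhobar}$ is not justified, and hence neither is the existence of $x_S$. (The paper also sketches an alternative that avoids these hard theorems, replacing the abstract eigenvariety characterisation with the ``Zariski closure of classical points in Galois deformation space'' set-up of Remark \ref{remgalalt} and appealing to \cite[Theorem 4.5.6]{KL} to see that the non-cohomological points $x_S$ nevertheless lie in this Zariski closure; your sketch does not address how to see $x_S$ in the closure of cohomological points, which is a real issue since $f_S$ itself has a weight-one component and is therefore not one of the classical points being closed up.) Fixing the citation to Pilloni--Stroh/Tian--Xiao would close the gap in your primary argument.
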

\begin{proof}
For this result, we need to use an alternative construction of the eigenvariety $\mathscr{E}(\n,\q\lf)_{\rhobar}$. This is given by the space $\mathcal{C}_\kappa(\mathfrak{m})$ of \cite[Theorem 4.5.4]{KL} (with modified level structures). Here the $\mathfrak{m}$ corresponds to our choice of residual Galois representation $\rhobar$. To show that $\mathcal{C}_\kappa(\mathfrak{m})$ coincides with $\mathscr{E}(\n,\q\lf)_{\rhobar}$ we need to verify that the subset of $\mathcal{C}_\kappa(\mathfrak{m})$ corresponding to the `classical points' $\mathscr{Z}(\n,\q\lf)_{\rhobar}$ is Zariski dense and accumulation. This follows from a classicality criterion for overconvergent Hilbert modular forms, which has recently been proved in two different ways --- by Pilloni and Stroh \cite[Th\'{e}or\`{e}me 1.2]{PS-class} and by Tian and Xiao \cite[Proposition 6.3]{TX}. Note that Theorem 6.5 of loc. cit. is the statement that the classical points are Zariski dense in the Kisin--Lai eigenvarieties, but the accumulation property also follows immediately from their proof. The existence of the points $x_{S}$ is now immediate from the construction of the Kisin--Lai eigenvariety.

Alternatively, one can avoid the appeal to difficult classicality theorems and instead show that $\mathscr{E}(\n,\q\lf)_{\rhobar}$ is isomorphic (with its additional structures) to the Zariski closure of the classical points in $\mathcal{C}_\kappa(\mathfrak{m})$, by working with the set-up described in Remark \ref{remgalalt}. Applying \cite[Theorem 4.5.6]{KL} then concludes the proof.
\end{proof}

Since the local factor $\pi_{f,\q}$ is an unramified twist of Steinberg, one naturally expects that the points $x_{S}$ lie in the eigenvariety $\mathscr{E}(\n,\q\lf)_{\rhobar}^{\q\mhyphen sp}$. Since we do not yet know local--global compatibility at $\q$ for $\pi_f$  it is a little delicate to prove this, but it follows from an application of the main result of \cite{mehilbert}.
\begin{proposition}\label{DT}
The points $x_{S}$ of Proposition \ref{KLapprox} all lie in $\mathscr{E}(\n,\q\lf)_{\rhobar}^{\q\mhyphen sp}$.
\end{proposition}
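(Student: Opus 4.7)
The plan is to argue by contradiction: assuming some irreducible component of $\mathscr{E}(\n,\q\lf)_{\rhobar}$ passing through $x_S$ lies in the generically principal-series locus $\mathscr{E}(\n,\q\lf)_{\rhobar}^{\q\mhyphen ps}$, I will produce an object which is ruled out by the main result of \cite{mehilbert}. By Theorem \ref{buzeig}, concluding that no such component exists will force $x_S$ to lie on some generically special component, and hence in $\mathscr{E}(\n,\q\lf)_{\rhobar}^{\q\mhyphen sp}$.

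Concretely, suppose $Y$ is an irreducible component through $x_S$ contained in $\mathscr{E}(\n,\q\lf)_{\rhobar}^{\q\mhyphen ps}$. The doubling map
\[
  \mathscr{E}(\n,\q\lf)_{\rhobar}^{\q\mhyphen ps} \longrightarrow \mathscr{E}(\n,\lf)_{\rhobar}
\]
from the last bullet of Theorem \ref{buzeig} sends $Y$ surjectively onto an irreducible component of $\mathscr{E}(\n,\lf)_{\rhobar}$, and sends $x_S$ to a point $y$ whose Hecke eigensystem agrees with $\theta_{f_S}$ at all places outside $\q$, and for which the two roots of the Hecke polynomial $X^2-\psi_y(T_\q)X+\mathbf{N}\q\,\psi_y(S_\q)$ are $\theta_f(U_\q)$ together with a second value forced (by the calculation of Satake parameters of the semi-simplification of $\sigma^\iota(\pi_{f,\q})$, cf.\ Remark \ref{weaklgc}) to equal $\theta_f(U_\q)\mathbf{N}\q$.

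Unravelling the construction of $\mathscr{E}(\n,\lf)_{\rhobar}$ via Buzzard's definite quaternionic eigenvarieties, the point $y$ produces a non-zero overconvergent quaternionic automorphic eigenform of tame level $U_1(\n)\cap U_0(\lf)$ --- crucially, with no level at $\q$ --- realising the Hecke eigensystem of $\pi_f$ away from $\q$, together with prescribed Iwahori-level data at the primes dividing $p$ matching $f_S$. The main result of \cite{mehilbert} excludes the existence of such an overconvergent eigenform under our running hypotheses (irreducibility of $\rhobar$, $p$-distinguished refinements, unramifiedness at $\lf$) whenever $\pi_{f,\q}$ is an unramified twist of the Steinberg representation, yielding the desired contradiction.

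The main obstacle is verifying that the data extracted from $y$ falls precisely within the scope of \cite{mehilbert}. The choice of residual representation $\rhobar$ is built into the component we work on and handles the Galois-theoretic input; the $p$-distinguished hypothesis on $\pi_f$ together with the agreement of $U_\pf$-eigenvalues with those of $f_S$ handles the refinement data at $p$; and the explicit calculation that $\psi_y(T_\q)$ and $\psi_y(S_\q)$ give the Satake parameters dictated by the semi-simplified local--global compatibility known for $\pi_f$ (Remark \ref{weaklgc}) pins down the behaviour at $\q$. Once these translations are carried out, the cited theorem applies directly.
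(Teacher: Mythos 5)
Your overall skeleton — argue by contradiction, use the doubling map down to $\mathscr{E}(\n,\lf)_{\rhobar}$, compute the two roots of the $\q$-Hecke polynomial from the known semisimple local--global compatibility of Remark \ref{weaklgc}, and then invoke \cite{mehilbert} — matches the paper's proof closely, but you have inverted the role of the cited reference, and the inversion breaks the argument.

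The result the paper invokes is \cite[Theorem 4.3]{mehilbert}, a \emph{$p$-adic level raising} theorem: given a point of the $\q$-old eigenvariety $\mathscr{E}(\n,\lf)_{\rhobar}$ whose $\q$-Hecke polynomial has roots in ratio $\mathbf{N}\q^{\pm 1}$, the theorem \emph{produces} a point of the $\q$-new eigenvariety $\mathscr{E}(\n,\q\lf)_{\rhobar}^{\q\mhyphen sp}$ with matching Hecke eigensystem. That produced point is precisely $x_S$, so $x_S$ lies in the $\q$-sp locus after all, and this is the contradiction. Your proposal instead asserts that \cite{mehilbert} \emph{excludes} the existence of the $\q$-old overconvergent eigenform $y$. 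That cannot be right: $y$ manifestly exists, since it is constructed as the image of $x_S$ under the doubling map (and the whole paper is engaged in manufacturing $p$-adic eigenforms of smaller level than the underlying classical automorphic representation allows). An exclusion of $y$ would amount to proving directly that no $p$-adic level lowering at $\q$ is possible when $\pi_{f,\q}$ is Steinberg, which is essentially the conclusion of local--global compatibility and is exactly what is not yet available in low weight. So your last step, as written, appeals to a statement that is both not what the cited theorem says and false as stated. The repair is to keep your computation of the two Satake parameters at $\q$, but to read off from it that $y$ satisfies the \emph{hypotheses} of the level-raising theorem, and then use the theorem's \emph{existence} conclusion to place $x_S$ in $\mathscr{E}(\n,\q\lf)_{\rhobar}^{\q\mhyphen sp}$.
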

\begin{proof}
Take a point $x_{S}$ and suppose that it does not lie in $\mathscr{E}(\n,\q\lf)_{\rhobar}^{\q\mhyphen sp}$. Then $x_{S}$ is a point of $\mathscr{E}(\n,\q\lf)_{\rhobar}^{\q\mhyphen ps}$, so we may consider the image of $x_{S}$ in $\mathscr{E}(\n,\lf)_{\rhobar}$. This point satisfies the assumptions of \cite[Theorem 4.3]{mehilbert}, but the conclusion of this Theorem tells us that $x_{S}$ is indeed in $\mathscr{E}(\n,\q\lf)_{\rhobar}^{\q\mhyphen sp}$.
\end{proof}

\section{Completed cohomology and level optimisation}\label{cc}
We describe an alternate construction of the eigenvariety $\mathscr{E}(\n,\m)_{\rhobar}^{\q\mhyphen sp}$, using the completed cohomology of Shimura curves. We will then apply the results of \cite{memathann} to deduce the following result:

\begin{theorem}\label{levellower}
Let $S\subset\{\pf|p\}$ and let $x_{S}$ be the point of $\mathscr{E}(\n,\q\lf)_{\rhobar}^{\q\mhyphen sp}$ obtained from Propositions \ref{KLapprox} and \ref{DT}. Then there is a point $y_{S}$ of $\mathscr{E}(\n,\q)_{\rhobar}^{\q\mhyphen sp}$ such that
\begin{itemize} 
\item The Hecke eigenvalues outside $\lf$ coincide with those of $x_{S}$
\item $\psi_{x_{S}}(U_\lf)$ is one of the roots of $X^2-\psi_{y_{S}}(T_\lf)X + (\mathbf{N}\lf)^{1-w}\iota^{-1}\chi_f(\varpi_\lf)$.
\end{itemize}
\end{theorem}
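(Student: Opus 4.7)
The proof plan follows the sketch given in the introduction: realise $\mathscr{E}(\n,\q\lf)_{\rhobar}^{\q\mhyphen sp}$ via the completed cohomology of an appropriate Shimura curve, transport the point $x_S$ there, and then apply the $p$-adic analogue of Mazur's principle proved in \cite{memathann} to strip $\lf$ from the tame level.

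First, since $[F:\Q]$ is even, I would fix an indefinite quaternion algebra $D'/F$ ramified precisely at $\q$ and at all but one infinite place of $F$. For a sufficiently small compact open $U^p \subset (D'\otimes_F \A_F^{\infty,p})^\times$, let $X_{U^p}$ denote the associated tower of Shimura curves and $\widetilde{H}^1_{U^p}$ Emerton's $p$-adically completed cohomology with coefficients in $L$. Localising the locally analytic vectors $(\widetilde{H}^1_{U^p})^{\mathrm{la}}$ at $\rhobar$ and applying Emerton's locally analytic Jacquet functor (as in \cite{Emint}) produces an eigenvariety over $\mathscr{W}_\kappa$. Using Jacquet--Langlands to match classical forms on $D'$ with classical forms on the definite quaternion algebra used in Theorem \ref{buzeig}, and invoking the uniqueness of abstract eigenvarieties \cite[Prop.~7.2.8]{BC}, this completed-cohomology eigenvariety is canonically isomorphic to $\mathscr{E}(\n,\q\lf)_{\rhobar}^{\q\mhyphen sp}$ when $U^p$ corresponds to the tame level $U_1(\n)\cap U_0(\lf)$, and to $\mathscr{E}(\n,\q)_{\rhobar}^{\q\mhyphen sp}$ when $U^p$ corresponds to $U_1(\n)$.

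Second, viewing $x_S$ through this identification as a system of Hecke eigenvalues occurring in the Jacquet module of $(\widetilde{H}^1_{U^p})^{\mathrm{la}}$ with $U^p$ corresponding to $U_1(\n)\cap U_0(\lf)$, the associated Galois representation is residually irreducible and unramified at $\lf$ by our standing hypotheses. The level-lowering result of \cite{memathann} then produces a point $y_S$ in the eigenvariety built from the smaller tame level $U^p = U_1(\n)$, whose Hecke eigenvalues away from $\lf$ coincide with those of $x_S$, and for which $\psi_{x_S}(U_\lf)$ is a root of the Hecke polynomial $X^2 - \psi_{y_S}(T_\lf) X + \mathbf{N}(\lf)\psi_{y_S}(S_\lf)$. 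Comparing central characters identifies $\psi_{y_S}(S_\lf) = (\mathbf{N}\lf)^{-w}\iota^{-1}\chi_f(\varpi_\lf)$, which recovers the precise form of the polynomial stated in the theorem, and transporting $y_S$ back through the identification places it in $\mathscr{E}(\n,\q)_{\rhobar}^{\q\mhyphen sp}$ as required.

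The main obstacle I anticipate is the rigorous identification of the Buzzard--definite eigenvariety of Theorem \ref{buzeig} with the completed-cohomology eigenvariety built from the indefinite algebra $D'$, restricted to $\q$-special components. Both sides carry an accumulating and Zariski-dense set of classical points matched by Jacquet--Langlands, but one must check carefully that the accessible refinements at $p$ and the Hecke action by $\mathscr{H}(\n\q\lf)$ are compatible on both sides; the discussion of $\nu(\pi,\chi)$ in Section~\ref{evars} is precisely what makes this compatibility natural. Once this identification is in place, the passage from $x_S$ to $y_S$ — including the precise Hecke polynomial relation at $\lf$ — is a direct application of the level-lowering machinery of \cite{memathann}, which is designed to strip auxiliary Iwahori levels from classes in completed cohomology whose Galois representation is residually irreducible and unramified at the auxiliary prime.
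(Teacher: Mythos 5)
Your proposal matches the paper's approach in all essentials: realise both $\mathscr{E}(\n,\q\lf)_{\rhobar}^{\q\mhyphen sp}$ and $\mathscr{E}(\n,\q)_{\rhobar}^{\q\mhyphen sp}$ as Emerton-style eigenvarieties built from the $p$-adically completed cohomology of Shimura curves for an indefinite quaternion algebra ramified at $\q$ and at all infinite places but one (you have the ramification behaviour stated correctly; the paper's phrase ``non-split at precisely one infinite place'' is a slip --- a Shimura \emph{curve} requires $B$ to be \emph{split} at exactly one infinite place), match these with Buzzard's definite eigenvarieties by uniqueness of abstract eigenvarieties, transport $x_S$ into $J_B(\widetilde{H}^1(U^p,L)_{\rhobar})$ via Corollary \ref{cohodesc}, and then invoke the $p$-adic Mazur's principle of \cite{memathann} to strip $\lf$ from the tame level. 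Your identification of $\psi_{y_S}(S_\lf)$ via the central character is also exactly what is needed to recover the stated Hecke polynomial. The one place where your diagnosis of the ``delicate step'' diverges from what the paper actually does: you flag compatibility of accessible refinements and Hecke actions under Jacquet--Langlands, whereas the genuinely technical content of Lemma \ref{Emertoneig} is establishing the accumulation and Zariski-density of the classical locus of the completed-cohomology eigenvariety --- which the paper handles by passing to a finite-index subgroup of $T_0$ so that \cite[Proposition 4.2.36]{MR2292633} (which needs \emph{cofree}, not merely coprojective, modules) applies, together with the classicality criterion of \cite[Theorem 4.4.5]{MR2292633}. Once that is in place, the refinement/Hecke bookkeeping you worried about follows formally from the uniqueness statement in \cite[Proposition 7.2.8]{BC}, since the classical sets $\mathscr{Z}(\n,\m)^{\q\mhyphen sp}_{\rhobar}$ are defined purely in terms of automorphic representations of $\GL_2(\A_F)$, independently of which avatar of overconvergent form one uses.
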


\subsection{Completed cohomology of Shimura curves}\label{emerton}
For this section we fix a quaternion algebra $B/F$ such that $B$ is non-split at precisely one infinite place (denoted $\tau_1$) and one finite place, $\q$ (recall that $[F:\Q]$ is assumed to be even, so such quaternion algebras exist). Denote by $G_B$ the reductive algebraic group over $\Q$ arising from the unit group of $B$. Note that $G_B$ is an inner form of $\mathrm{Res}_{F/\Q}(\GL_2)$. For $U$ a compact open subgroup of $G_B(\A_f)$ we have a complex (disconnected) Shimura curve

$$M(U)(\C) = G_B(\Q)\backslash G_B(\A_f)\times (\C-\R)/U$$
where $G_B(\Q)$ acts on $\C-\R$ via the $\tau_1$ factor of $G_B(\R)$.

These curves have canonical models over $F$, which we denote by $M(U)$. We follow the conventions of \cite{carmauv} to define this canonical model.  

\begin{definition}
We define $\widetilde{H}^1(U^p,L)$ to be 
\[\left(\varprojlim_{n} \varinjlim_{U_p} H^1_{\acute{e}t}(M(U_p U^p)_{\overline{F}},\OO_L/\m_L^n)\right)\otimes_{\OO_L}L,\]
where $U^p$ is any compact open subgroup of $G_B(\A^{\infty,p})$ and $U_p$ runs over the compact open subgroups of $G_B(\Q_p)$.
\end{definition}

The $L$-vector space $\widetilde{H}^1(U^p,L)$ is naturally an $L$-Banach space with an admissible continuous action of $G_B(\Q_p)\cong \prod_{\pf|p}\GL_2(F_\pf)$. Moreover, there is a direct summand $\widetilde{H}^1(U^p,L)_{\rhobar}$ such that (in particular) all the systems of Hecke eigenvalues arising from $\widetilde{H}^1(U^p,L)_{\rhobar}$ correspond to Galois representations whose residual representation is isomorphic to $\rhobar$.

We now explain how the systems of Hecke eigenvalues parameterised by the set $\mathscr{Z}(\n,\m)_{\rhobar}^{\q\mhyphen sp}$ can be seen in the space $\widetilde{H}^1(U^p,L)_{\rhobar}$.

Suppose we have a $\Sigma$-tuple of integers $k = (k_\tau)_{\tau \in \Sigma}$ with each $k_\tau\ge 2$ and an integer $w$ with $k_\tau = w$ mod $2$ for all $\tau$.

We denote by $W_{k,w}$ the $L$-representation of $\GL_2(F_p)$ defined by
\[\otimes_{\tau\in\Sigma_p}(\tau\circ\mathrm{det})^{(w-k_\tau+2)/2}\mathrm{Sym}^{k_\tau-2} V_\tau\]
where $V_\tau$ is the representation of $\GL_2(F_p)$ acting via $\tau$ and the standard representation of $\GL_2(L)$. These representations then give rise to lisse \'{e}tale $L$-sheaves $\FF_{k,w}$ on the curves $M(U)$ (see, for example, \cite[\S 3.2]{memathann}). 

We set $U^p$ to be the prime to $p$ part of the compact open subgroup of $G_B(\A^\infty)$ given by $U_1(\n)\cap U_0(\m/\q)$. Now the Hecke algebra $\cH(\m\n)$ acts on $\widetilde{H}^1(U^p,L)$ as follows: for places $v$ prime to $\q$, we have a standard action by double coset operators associated to our fixed uniformisers $\varpi_v$. For the place $\q$, we choose a uniformiser $\varpi_{D_\q}$ of the order $\OO_{D_\q}$ whose reduced norm is equal to the fixed uniformiser $\varpi_\q$ of $\OO_\q$, and let $U_\q$ act on $\widetilde{H}^1(U^p,L)$ via the action of $\varpi_{D_\q}$. This definition is explained by the following:
\begin{lemma}
Let $\pi_\q = \mathrm{St}\otimes\mu$ be an unramified twist of the Steinberg representation of $\GL_2(F_\q)$. The local Jacquet--Langlands correspondent $\mathrm{JL}(\pi_\q)$ of $\pi_\q$ is the one-dimensional representation of $D_\q^\times$ given by $\mu\circ\mathrm{Nrd}$, where $\mathrm{Nrd}$ denotes the reduced norm. The $U_\q$-eigenvalue of the space of Iwahori-invariants in $\pi_\q$ is equal to $\mu(\varpi_\q)$, and is therefore equal to the eigenvalue for the action of $\varpi_{D_\q}$ on $\mathrm{JL}(\pi_\q)$.
\end{lemma}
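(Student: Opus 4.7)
The plan is to treat the two halves of the assertion independently and observe that they coincide.

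The Jacquet--Langlands part is essentially a citation: the local Jacquet--Langlands correspondence attaches to the Steinberg representation of $\GL_2(F_\q)$ the trivial character of $D_\q^\times$, and is compatible with twisting, so $\mathrm{JL}(\pi_\q) = \mu\circ\mathrm{Nrd}$. Since we chose $\varpi_{D_\q}$ so that $\mathrm{Nrd}(\varpi_{D_\q}) = \varpi_\q$, it acts on this one-dimensional representation by multiplication by $\mu(\varpi_\q)$.

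The substantive step is the computation of the $U_\q$-eigenvalue on $\pi_\q^{I_\q}$, where $I_\q$ denotes the Iwahori subgroup of $\GL_2(F_\q)$. I would start from the embedding recalled in Remark~\ref{examples}, namely $\pi_\q \hookrightarrow \mathrm{Ind}(\mu|\cdot|_\q^{1/2}\otimes\mu|\cdot|_\q^{-1/2})$ (normalised induction from the upper triangular Borel of $\GL_2(F_\q)$), whose Iwahori-invariant subspace is two-dimensional with the standard basis $\phi_1,\phi_2$ supported on the two Iwahori--Bruhat cells. Using the $\mathbf{N}\q$-fold decomposition of $I_\q \,\mathrm{diag}(\varpi_\q,1)\, I_\q$ into single cosets of the form $\begin{pmatrix}\varpi_\q & a\\ 0 & 1\end{pmatrix}I_\q$ with $a\in\OO_\q/\varpi_\q\OO_\q$, a direct matrix computation shows that $U_\q$ is triangular in this basis with diagonal entries $\mu(\varpi_\q)$ and $\mathbf{N}(\q)\mu(\varpi_\q)$. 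To identify which of these is the eigenvalue on the Steinberg subrepresentation rather than on the one-dimensional quotient $\mu\circ\det$, I would compute the $U_\q$-action on that quotient directly: since $\det\mathrm{diag}(\varpi_\q,1) = \varpi_\q$ and there are $\mathbf{N}\q$ coset representatives, the quotient eigenvalue is plainly $\mathbf{N}(\q)\mu(\varpi_\q)$, forcing the Steinberg eigenvalue to be $\mu(\varpi_\q)$, in agreement with the quaternion-side value.

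I expect the only real pitfall to be bookkeeping of normalisations --- Hecke-normalised local Langlands, normalised parabolic induction together with the modulus character $\delta^{1/2}$, and the precise choice of single-coset representatives for the double coset defining $U_\q$ --- which must all conspire so that the factor $|\varpi_\q|_\q^{1/2}$ hidden inside the accessible refinement $\mu|\cdot|_\q^{1/2}$ exactly cancels against the $\mathbf{N}(\q)^{1/2}$ produced by the Hecke operator, leaving $\mu(\varpi_\q)$ on the nose rather than a power-of-$\mathbf{N}\q$ shift.
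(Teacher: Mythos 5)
Your computation is correct and is precisely what the paper's one-line proof refers to by ``the standard computation of the $U_\q$-eigenvalue of the space of Iwahori-invariants in $\pi_\q$''; you have simply filled in the details. The eigenvalue $\mathbf{N}\q\,\mu(\varpi_\q)$ on the one-dimensional quotient $\mu\circ\det$ is the clean way to isolate the Steinberg eigenvalue, and the $\mathbf{N}\q^{\pm 1/2}$ factors from normalised induction and the $\mathbf{N}\q$-fold coset decomposition cancel exactly as you anticipate.
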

\begin{proof}
This follows the standard computation of the $U_\q$-eigenvalue of the space of Iwahori-invariants in $\pi_\q$.
\end{proof}
We have the following proposition, which is proved exactly as \cite[Theorem 5.2]{memathann}

\begin{proposition}There is a $G_B(\Q_p)$, $\Gal(\overline{F}/F)$ and Hecke--equivariant isomorphism
\[\bigoplus_{(k,w)}\varinjlim_{U_p} H^1_{\acute{e}t}(M(U_p U^p)_{\overline{F}},\FF_{k,w})_{\rhobar}\otimes_L W_{k,w}^\vee \cong \widetilde{H}^1(U^p,L)_{\rhobar}^{alg} \]
\end{proposition}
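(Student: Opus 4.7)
The plan is to follow closely the argument of \cite[Theorem 5.2]{memathann}, adapting Emerton's general machinery relating completed \'{e}tale cohomology to classical cohomology with algebraic coefficients. A crucial simplifying feature is that the Shimura varieties $M(U_pU^p)$ here are \emph{curves}, so their \'{e}tale cohomology is concentrated in degrees $0$ and $1$, which trivialises much of the higher homological algebra that would otherwise appear.

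First I would construct, for each algebraic representation $W_{k,w}$ of $G_B(\Q_p)$, a natural $G_B(\Q_p)$-, Hecke-, and Galois-equivariant map
\[\varinjlim_{U_p} H^1_{\acute{e}t}(M(U_pU^p)_{\overline{F}}, \FF_{k,w}) \otimes_L W_{k,w}^\vee \longrightarrow \widetilde{H}^1(U^p, L),\]
coming from Emerton's comparison between cohomology of the constant sheaf and cohomology of $\FF_{k,w}$ (which is the sheaf attached to the local system $W_{k,w}$). Summing over $(k,w)$ yields the candidate map; its image visibly lies in the locally algebraic vectors, and equivariance under all three actions is built into the construction at each finite level and hence survives the limit.

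Next I would show this map is an isomorphism onto the algebraic vectors. For this, I would use the fact that locally algebraic vectors in any admissible Banach $G_B(\Q_p)$-representation decompose canonically as a direct sum over irreducible algebraic representations, each isotypic component being of the form $W \otimes \mathrm{Hom}_{G_B(\Q_p)}(W, V^{la})$. The identification of the multiplicity spaces with $\varinjlim_{U_p} H^1_{\acute{e}t}(M(U_pU^p)_{\overline{F}}, \FF_{k,w})$ is provided by Emerton's spectral sequence
\[E_2^{i,j} = \mathrm{Ext}^i_{G_B(\Q_p)\mhyphen la}(W_{k,w}, \widetilde{H}^j(U^p, L)^{la}) \Rightarrow \varinjlim_{U_p} H^{i+j}_{\acute{e}t}(M(U_pU^p)_{\overline{F}}, \FF_{k,w}),\]
which degenerates because the Shimura curves are one-dimensional and so the target vanishes outside degrees $\le 1$. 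Finally I would project both sides to the $\rhobar$-isotypic component; this is well-defined and exact because $\rhobar$ is (absolutely) irreducible and the Hecke operators commute with the geometric actions.

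The main obstacle is purely bookkeeping: verifying the degeneration of Emerton's spectral sequence and the exactness of the isotypic decomposition at the level of admissible Banach representations. These issues are precisely the ones addressed in \cite[Theorem 5.2]{memathann}, and the dimension-one nature of the curves $M(U_pU^p)$ ensures no new phenomena can occur --- so the proof transfers essentially verbatim, with the only additional ingredient being the harmless projection to the direct summand cut out by $\rhobar$.
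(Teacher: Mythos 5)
Your proposal takes essentially the same route as the paper, which simply states that the result is ``proved exactly as [Newton, \emph{Math.\ Ann.}, Theorem 5.2]'' and gives no further detail; your sketch of Emerton's comparison map, the degeneration of the spectral sequence in the curve case, and the harmless passage to the $\rhobar$-summand is a fair unpacking of what that citation involves. One small refinement worth noting: the degeneration is not purely a consequence of the curves being one-dimensional --- the localisation at the non-Eisenstein ideal $\rhobar$ is what removes the potential $\widetilde{H}^0$ contribution, so the projection to the $\rhobar$-summand should be placed \emph{before} invoking degeneration rather than as a final afterthought.
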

where the right hand side is the space of locally algebraic vectors (in the sense of \cite[4.2.6]{EmAn}) in the $L$--Banach space representation $\widetilde{H}^1(U^p,L)_{\rhobar}$.

The above proposition allows us to determine the contribution of classical automorphic representations to the Jacquet module (in the sense of \cite{MR2292633}) $J_B(\widetilde{H}^1(U^p,L)_{\rhobar})$ of the ($\mathbb{Q}_p$-)locally analytic vectors in $\widetilde{H}^1(U^p,L)_{\rhobar}$:

\begin{lemma}
There is a $T$, $\Gal(\overline{F}/F)$ and Hecke equivariant embedding
\[\bigoplus_{(k,w)}\left(\varinjlim_{U_p} H^1_{\acute{e}t}(M(U_p U^p)_{\overline{F}},\FF_{k,w})_{\rhobar}\right )_N\otimes_L \chi_{k,w}\hookrightarrow J_B(\widetilde{H}^1(U^p,L)_{\rhobar})\]
where the subscript $N$ denotes coinvariants (i.e. the classical Jacquet module) and $\chi_{k,w}$ is the character of $T$ given by\[\begin{pmatrix}
s_1 & 0\\0 & s_2
\end{pmatrix}\mapsto \prod_{\tau\in\Sigma_p} \tau(s_1)^{(k_\tau-2-w)/2} \tau(s_2)^{(-w-k_\tau+2)/2}\]
\end{lemma}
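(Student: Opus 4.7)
The plan is to derive this embedding by applying Emerton's locally analytic Jacquet functor $J_B$ to the inclusion of locally algebraic vectors into locally analytic vectors, and then invoking the previous proposition to identify the source.

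First, I would observe that $\widetilde{H}^1(U^p,L)_{\rhobar}^{alg}$ sits inside $\widetilde{H}^1(U^p,L)_{\rhobar}^{la}$ as a continuous $G_B(\Q_p)$-stable, locally analytic subrepresentation. Applying left exactness of $J_B$ on admissible locally analytic representations (see \cite[Lemma 3.4.7]{MR2292633}), I obtain a $T$, $\Gal(\overline{F}/F)$ and Hecke equivariant injection
\[
J_B\bigl(\widetilde{H}^1(U^p,L)_{\rhobar}^{alg}\bigr) \hookrightarrow J_B\bigl(\widetilde{H}^1(U^p,L)_{\rhobar}\bigr).
\]

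Next, by the preceding proposition, the source decomposes as $\bigoplus_{(k,w)} \sigma_{k,w} \otimes_L W_{k,w}^\vee$, where $\sigma_{k,w} := \varinjlim_{U_p} H^1_{\acute{e}t}(M(U_p U^p)_{\overline{F}},\FF_{k,w})_{\rhobar}$ is smooth admissible. Emerton's compatibility of $J_B$ with locally algebraic tensor products (\cite[Proposition 4.3.6]{MR2292633}) identifies the Jacquet module of each summand with $(\sigma_{k,w})_N \otimes_L (W_{k,w}^\vee)^N$, where $(W_{k,w}^\vee)^N$ is a one-dimensional space of $N$-invariants on which $T$ acts through some character. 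Distinct $(k,w)$ give rise to distinct restrictions to $T_0$ of this character, so the Jacquet modules of the various summands are disjoint and $J_B$ of the algebraic direct sum embeds into $J_B$ of the whole (as a direct sum of individual contributions).

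Finally, I would identify $(W_{k,w}^\vee)^N$ with $\chi_{k,w}$ by a direct highest-weight computation for the upper triangular Borel. Since the lowest weight of $\mathrm{Sym}^{k_\tau-2}V_\tau$ is $\tau(s_2)^{k_\tau-2}$, the lowest weight of $W_{k,w}$ equals $\prod_\tau \tau(s_1)^{(w-k_\tau+2)/2}\tau(s_2)^{(w+k_\tau-2)/2}$; its negative, which gives the highest weight of $W_{k,w}^\vee$, is exactly $\chi_{k,w}$. The main technical subtlety is the compatibility of $J_B$ with the (potentially infinite) direct sum over $(k,w)$, but this is routine given the disjointness of the $T_0$-characters on distinct summands; everything else is a matter of unwinding the previous proposition and invoking the formal properties of $J_B$ established by Emerton.
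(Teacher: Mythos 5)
Your proposal is correct and follows essentially the same route as the paper: apply left exactness of Emerton's $J_B$ to the inclusion of locally algebraic vectors, decompose via the preceding proposition, invoke Proposition 4.3.6 of \cite{MR2292633} on each summand $\sigma_{k,w}\otimes W_{k,w}^\vee$, and identify $(W_{k,w}^\vee)^N$ with $\chi_{k,w}$ by a highest-weight computation. Your explicit computation of the lowest weight of $W_{k,w}$ and its negative does correctly recover $\chi_{k,w}$, and the observation that distinct $(k,w)$ give distinct $T_0$-restrictions (so the images inside $J_B$ form a direct sum) is the right way to handle the direct sum, which the paper leaves implicit.
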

\begin{proof}
This follows from left exactness of the Jacquet module functor and \cite[Proposition 4.3.6]{MR2292633}, since the highest weight space $(W_{k,w}^\vee)^N$ has $T$ action given by $\chi_{k,w}$.
\end{proof}

The following lemma is a standard result in the smooth representation theory of the groups $\GL_2(F_\pf)$.
\begin{lemma}\label{jacquet}Let $\mu,\mu_1,\mu_2$ be smooth complex characters of $F_\pf^\times$. 
\begin{enumerate}
\item The Jacquet module $\pi(\mu_1,\mu_2)_{N(\pf)}$ is isomorphic as a $T_\pf$ representation to \[\mu_1|\cdot|_\pf^{1/2}\otimes\mu_2|\cdot|_\pf^{-1/2} \oplus\mu_2|\cdot|_\pf^{1/2}\otimes\mu_1|\cdot|_\pf^{-1/2}.\]

\item The Jacquet module $(\mathrm{St}\otimes\mu)_{N(\pf)}$ is isomorphic as a $T_\pf$ representation to \[\mu|\cdot|_\pf\otimes\mu|\cdot|_\pf^{-1}.\]

\end{enumerate}
\end{lemma}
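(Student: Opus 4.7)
The plan is to deduce both parts from the Bruhat decomposition of $G_\pf = \GL_2(F_\pf)$ and the resulting geometric lemma (equivalently, Casselman's computation of the Jacquet module of an induced representation).

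For part (1), I would use the decomposition $G_\pf = B_\pf \sqcup B_\pf w N_\pf$, where $w$ is the nontrivial Weyl element. Filtering the normalised induction $\pi(\mu_1,\mu_2) = \mathrm{Ind}_{B_\pf}^{G_\pf}(\mu_1\otimes\mu_2)$ by sections vanishing on the closed cell yields a short exact sequence of smooth $B_\pf$-representations. Applying the exact functor of $N_\pf$-coinvariants and identifying sections supported on the open cell with compactly supported sections on $N_\pf$, a standard calculation shows that the two graded pieces of the resulting filtration on $\pi(\mu_1,\mu_2)_{N(\pf)}$ are, as characters of $T_\pf$, equal to $(\mu_1\otimes\mu_2)\delta_{B_\pf}^{1/2}$ and $(\mu_2\otimes\mu_1)\delta_{B_\pf}^{1/2}$. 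Since $\delta_{B_\pf}(\mathrm{diag}(a,d)) = |a/d|_\pf$, these are exactly the two summands in the statement. The extension splits automatically when the two characters are distinct; when they coincide (which can only happen for very special $(\mu_1,\mu_2)$) one can verify splitness directly from the $\GL_2$ principal series structure.

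For part (2), I would use the short exact sequence $0 \to \mathrm{St}\otimes\mu \to \pi(\mu|\cdot|_\pf^{1/2},\mu|\cdot|_\pf^{-1/2}) \to \mu\circ\det \to 0$ recalled in Remark~\ref{examples}. The Jacquet functor $(-)_{N(\pf)}$ is exact, so applying it and invoking part (1) with $\mu_1 = \mu|\cdot|_\pf^{1/2}$ and $\mu_2 = \mu|\cdot|_\pf^{-1/2}$, the middle term becomes $\mu|\cdot|_\pf\otimes\mu|\cdot|_\pf^{-1} \oplus \mu\otimes\mu$, while the right-hand term, being trivial on $N_\pf$, has Jacquet module $\mu\otimes\mu$. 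Since these two summands are distinct characters of $T_\pf$ (as $|\cdot|_\pf$ is nontrivial), the summand $\mu\otimes\mu$ is forced to map isomorphically onto the quotient, so that the kernel $(\mathrm{St}\otimes\mu)_{N(\pf)}$ is identified with $\mu|\cdot|_\pf\otimes\mu|\cdot|_\pf^{-1}$, as required.

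No step here is genuinely difficult: the main work is careful bookkeeping of the modulus characters that appear because of the normalisation of the induction, and both statements are classical in the smooth representation theory of $\GL_2$ over a $p$-adic field.
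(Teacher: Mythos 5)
Your argument is the standard one --- Bruhat filtration of the restriction to $B_\pf$ for part (1), then applying the exact Jacquet functor to the short exact sequence defining $\mathrm{St}\otimes\mu$ for part (2) --- and it is precisely what one finds in the reference (Goldfeld--Hundley, Theorem 8.12.15) that the paper cites in place of a proof, so the two approaches coincide in substance.

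However, your aside that when the two $T_\pf$-characters coincide ``one can verify splitness directly from the $\GL_2$ principal series structure'' is not correct, and you cannot repair it: the opposite is true. If $\mu_1=\mu_2=\mu$ then the normalised induction $\mathrm{Ind}_{B_\pf}^{G_\pf}(\mu\otimes\mu)$ is irreducible, so $\End_{G_\pf}$ of it is $\C$ by Schur's lemma, and Frobenius reciprocity gives
\[
\C\;\cong\;\Hom_{G_\pf}\bigl(\pi(\mu,\mu),\,\mathrm{Ind}_{B_\pf}^{G_\pf}(\mu\otimes\mu)\bigr)\;\cong\;\Hom_{T_\pf}\bigl(\pi(\mu,\mu)_{N(\pf)},\,\mu|\cdot|_\pf^{1/2}\otimes\mu|\cdot|_\pf^{-1/2}\bigr),
\]
which is one-dimensional; a split Jacquet module would make the right-hand side two-dimensional. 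So $\pi(\mu,\mu)_{N(\pf)}$ is a non-split self-extension of $\mu|\cdot|_\pf^{1/2}\otimes\mu|\cdot|_\pf^{-1/2}$, and the lemma as literally stated fails in that degenerate case. This is harmless for the paper, because the lemma is only invoked at places $\pf\mid p$, where hypothesis (3) of Theorem \ref{mylgc} forces $\mu_1\neq\mu_2$, and there your splitting argument (choose $t\in T_\pf$ on which the two characters disagree and diagonalise its action) is complete and correct. The clean fix is to add the hypothesis that the two $T_\pf$-characters are distinct, not to assert a splitting you cannot prove.
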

\begin{proof}
See for example \cite[Theorem 8.12.15]{GoldfeldBook}.
\end{proof}

As a consequence of Lemma \ref{jacquet}, together with the Jacquet--Langlands correspondence and the contribution of automorphic representations of $G_B(\A)$ to the cohomology of the curves $M(U)$, we obtain the following proposition:
\begin{lemma}
Suppose $(\psi,\nu)\in\mathscr{Z}(\n,\m)^{\q\mhyphen sp}_{\rhobar}$. Then there is a non-zero element \[v\in J_B(\widetilde{H}^1(U^p,L)_{\rhobar})\otimes_L \overline{\Q}_p\] on which the Hecke operators away from $p$ act via the character $\psi$ and on which the torus $T$ acts via the character $\nu$.
\end{lemma}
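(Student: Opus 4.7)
The strategy is to realise $(\psi,\nu)$ via a classical automorphic representation, transfer it to $G_B(\A)$ by Jacquet--Langlands, find it in the \'etale cohomology of the Shimura curves $M(U_pU^p)$, and then feed it into the embedding lemma together with the smooth Jacquet module computation of Lemma \ref{jacquet} to exhibit the required eigenvector.

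By the definition of $\mathscr{Z}(\n,\m)^{\q\mhyphen sp}_{\rhobar}$, I start with an unramified cuspidal automorphic representation $\pi$ of $\GL_2(\A_F)$ of tame level $U_1(\n)\cap U_0(\m)$, weight $(k,w)$ with each $k_\tau\geq 2$ and $\kappa_\pi\in \mathscr{W}_\kappa$, having residual Galois representation $\rhobar$ and with $\pi_\q \cong \mathrm{St}\otimes \mu_\q$ for some unramified character $\mu_\q$. The character $\nu$ equals $\nu(\pi,\chi)$ for some accessible refinement $\chi = \bigotimes_{\pf|p}\chi_\pf$ of $\pi$, and $\psi$ records the away-from-$p$ Hecke eigenvalues of a chosen eigenvector in $(\pi^\infty)^{U_1(\n)\cap U_0(p\m)}$. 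Since $B$ is ramified at exactly two places --- $\tau_1$, where $\pi_{\tau_1}=D_{k_{\tau_1},w}$ is a (genuine) discrete series as $k_{\tau_1}\geq 2$, and $\q$, where $\pi_\q$ is a twist of Steinberg --- the Jacquet--Langlands transfer $\pi'$ of $\pi$ to $G_B(\A)$ exists as a cuspidal automorphic representation; its $\q$-component is the one-dimensional representation $\mu_\q\circ\mathrm{Nrd}$, and away from $\q$ and $\tau_1$ it agrees with $\pi$ under the fixed local isomorphisms between $G_B$ and $\GL_2$.

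Next I realise $\pi'$ in cohomology. The Eichler--Shimura/Matsushima decomposition of $\varinjlim_{U_p}H^1_{\acute{e}t}(M(U_pU^p)_{\overline{F}},\FF_{k,w})\otimes_L\overline{\Q}_p$ contains an isotypic piece for $\pi'^{\infty}$, which lies in the $\rhobar$-summand because $\pi$ has residual Galois representation $\rhobar$. The tame level $U^p$ does have non-zero invariants in $\pi'^{\infty,p}$: away from $p\q$ this follows from the hypothesis on $\pi$, and at $\q$ the maximal compact $\OO_{D_\q}^\times$ acts trivially on the one-dimensional $\pi'_\q$. The prime-to-$p$ Hecke operators act on this piece by $\psi$: away from $\q$ this is by construction, and at $\q$ the lemma identifying the action of $U_\q$ with that of $\varpi_{D_\q}$ shows the eigenvalue is $\mu_\q(\varpi_\q)$ in both settings.

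Finally, passing to $N$-coinvariants and feeding into the embedding
\[\bigoplus_{(k,w)}\Bigl(\varinjlim_{U_p} H^1_{\acute{e}t}(M(U_p U^p)_{\overline{F}},\FF_{k,w})_{\rhobar}\Bigr)_N\otimes_L \chi_{k,w}\hookrightarrow J_B(\widetilde{H}^1(U^p,L)_{\rhobar})\]
produces a non-zero vector in $J_B(\widetilde{H}^1(U^p,L)_{\rhobar})\otimes_L\overline{\Q}_p$ on which the away-from-$p$ Hecke operators act by $\psi$. The $T$-action on this vector is $\chi_{k,w}$ times a character appearing in the smooth Jacquet module of $\pi_p = \pi'_p$. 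By Lemma \ref{jacquet}, applied componentwise over the primes $\pf|p$ and selecting (in the principal series case) the summand corresponding to $\chi_\pf$, this character is $\iota^{-1}\chi\cdot(|\cdot|^{1/2}\otimes |\cdot|^{-1/2})$, whose product with $\chi_{k,w}$ equals $\nu(\pi,\chi)=\nu$. The only real subtlety is bookkeeping: one must check that the chosen accessible refinement $\chi_\pf$ picks out the desired summand of the principal series Jacquet module (in the Steinberg case the Jacquet module is already one-dimensional, so there is nothing to choose), and that the algebraic factor of $\nu(\pi,\chi)$ matches $\chi_{k,w}$ exactly --- but this is precisely how $\nu(\pi,\chi)$ was defined.
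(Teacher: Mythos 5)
Your proof is correct and fills in precisely the sketch the paper itself gives (the paper states this lemma as a direct consequence of Lemma \ref{jacquet}, the Jacquet--Langlands correspondence, and the realisation of automorphic representations of $G_B(\A)$ in the cohomology of the curves $M(U)$, without further elaboration). You correctly identify that $k_{\tau_1}\geq 2$ guarantees $\pi_{\tau_1}$ is genuine discrete series so the Jacquet--Langlands transfer exists, that $\pi'_\q$ being one-dimensional gives non-zero $\OO_{D_\q}^\times$-invariants at the level $U^p$, that the $U_\q$-eigenvalue matches the $\varpi_{D_\q}$-action via the displayed lemma, and that Lemma \ref{jacquet} combined with the half-integral twists $|\cdot|^{\pm 1/2}$ built into the definition of $\nu(\pi,\chi)$ makes the $T$-eigencharacter come out to exactly $\nu$.
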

The above proposition tells us that the `classical set' $\mathscr{Z}(\n,\m)^{\q\mhyphen sp}_{\rhobar}$ can be seen in the locally analytic $T$-representations $J_B(\widetilde{H}^1(U^p,L)_{\rhobar})$. We now summarise Emerton's construction of an eigenvariety from this representation, and show that it is an eigenvariety for the triple  $(\cH(\m\n),\mathscr{Z}(\n,\m)^{\q\mhyphen sp}_{\rhobar},\mathscr{W}_\kappa)$.

The $T$-representation $J_B(\widetilde{H}^1(U^p,L)_{\rhobar})$ is naturally dual to a coherent sheaf $\mathscr{M}$ on $\widehat{T}$ (see \cite[Proposition 2.3.2]{Emint}). Denote by $Y$ the fiber product $\widehat{T}\times_{\widehat{T}_0}\mathscr{W}_\kappa$, and let $\mathscr{M}_Y$ denote the pullback of $\mathscr{M}$ to a coherent sheaf on $Y$. 

Taking the relative spectrum of the commutative subalgebra of endomorphisms of this sheaf generated by the Hecke algebra $\mathcal{H}(\m\n)$ gives a rigid space with a finite map to $Y$. Passing to the nilreduction gives a reduced rigid space which we denote by $\mathscr{E}_\kappa$. By the above lemma, we have a subset $Z \subset \mathscr{E}_\kappa$ of classical points corresponding to the elements of $\mathscr{Z}(\n,\m)^{\q\mhyphen sp}_{\rhobar}$.

\begin{lemma}\label{Emertoneig}
The space $\mathscr{E}_\kappa$, together with the classical subset $Z$, is an eigenvariety for the triple  $$(\cH(\m\n),\mathscr{Z}(\n,\m)^{\q\mhyphen sp}_{\rhobar},\mathscr{W}_\kappa).$$
\end{lemma}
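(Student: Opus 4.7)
The plan is to verify the axioms of the abstract eigenvariety definition directly. By construction, $\mathscr{M}_Y$ is a coherent sheaf on the rigid space $Y = \widehat{T}\times_{\widehat{T}_0}\mathscr{W}_\kappa$, and $\mathscr{E}_\kappa$ is the nilreduction of the relative spectrum over $Y$ of the subalgebra of $\mathrm{End}_{\OO_Y}(\mathscr{M}_Y)$ generated by the image of $\cH(\m\n)$. This immediately yields the finite morphism $\nu:\mathscr{E}_\kappa \to Y$, the homomorphism $\psi: \cH(\m\n) \to \OO(\mathscr{E}_\kappa)$, and the surjectivity of $\psi\otimes\nu^*: \cH(\m\n)\otimes \OO(V) \to \OO(\nu^{-1}(V))$ on affinoid opens $V \subset Y$.

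Next I would establish the bijection on pairs $(\psi_x,\nu(x))$ between $Z$ and $\mathscr{Z}(\n,\m)^{\q\mhyphen sp}_{\rhobar}$. The previous lemma provides one direction: every element of $\mathscr{Z}(\n,\m)^{\q\mhyphen sp}_{\rhobar}$ occurs as the Hecke/torus eigensystem of a non-zero vector in $J_B(\widetilde{H}^1(U^p,L)_{\rhobar}) \otimes_L \overline{\Q}_p$, hence gives a point of $\mathscr{E}_\kappa$. In the other direction, a point of $Z$ by definition arises from such an eigenvector; the isomorphism of the earlier proposition of this section, identifying $\widetilde{H}^1(U^p,L)_{\rhobar}^{alg}$ with the direct sum of the \'etale cohomology groups $H^1_{\acute{e}t}(M(U_pU^p)_{\overline{F}},\FF_{k,w})_{\rhobar}\otimes W_{k,w}^\vee$, matches this eigenvector to a classical automorphic contribution at the prescribed tame level. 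That $\nu(x)|_{T_0}\in\mathscr{W}_\kappa$ is forced by the pullback along $Y\to\mathscr{W}_\kappa$, and strong multiplicity one ensures that the pair $(\psi_x,\nu(x))$ determines the contributing automorphic representation.

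The main obstacle is Zariski density and accumulation of $Z$ in $\mathscr{E}_\kappa$. I would handle this via the standard ``small slope implies classical'' argument that underpins Emerton's construction of eigenvarieties from Jacquet modules: on each affinoid $V\subset\mathscr{W}_\kappa$ the fibre $\nu^{-1}(V)$ admits a slope decomposition, and a finite-slope eigenvector in $J_B(\widetilde{H}^1(U^p,L)_{\rhobar})$ whose slope at each $\pf|p$ is small compared with the weight $(k_\tau)_{\tau\in\Sigma_\pf}$ lies in the image from the locally algebraic part, hence is classical. Combined with the fact that algebraic weights of the required parity accumulate at every point of $\mathscr{W}_\kappa$, this both yields a Zariski-dense classical subset of $\mathscr{E}_\kappa$ and the accumulation property at each classical point. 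The technical heart of the proof is therefore the classicality criterion in the Jacquet-module setting; once this is in place, uniqueness of abstract eigenvarieties identifies $\mathscr{E}_\kappa$ with $\mathscr{E}(\n,\m)^{\q\mhyphen sp}_{\rhobar}$ from Theorem \ref{buzeig}, providing an independent check on the construction.
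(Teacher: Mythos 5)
Your overall plan matches the paper's: verify that the construction directly supplies the finite map, the Hecke homomorphism, the surjectivity, and the bijection between $Z$ and $\mathscr{Z}(\n,\m)^{\q\mhyphen sp}_{\rhobar}$, and then the real work is the Zariski density and accumulation of $Z$. You have also correctly identified that a small-slope-implies-classical criterion is the engine for that last step. However, there is a genuine gap in the way you get from ``fibres admit slope decompositions, classical weights accumulate'' to ``$Z$ is Zariski dense and accumulation in $\mathscr{E}_\kappa$''.

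The issue is that you need more than a classicality criterion: you need to know that the weight map $\nu$ behaves well enough that every point of $\mathscr{E}_\kappa$ can be approximated by points lying over classical algebraic weights, in particular that the image of each irreducible component of $\mathscr{E}_\kappa$ is Zariski open in $\mathscr{W}_\kappa$. This is exactly what requires the Buzzard/Coleman eigenvariety machine (orthonormalisable Banach modules with a compact operator, Fredholm hypersurfaces), and the input needed to run that machine is Emerton's Proposition 4.2.36, which produces the required finite-slope part of an orthonormalisable Banach module only when the completed cohomology is a \emph{cofree} module over the Iwasawa algebra of the relevant compact group. Here $\widetilde{H}^1(U^p,L)_{\rhobar}$ is coprojective but not a priori cofree over $T_0$, which is why the paper shrinks to a compact open $U_p$ over which cofreeness holds (quotienting by $X=\overline{F^\times\cap U_pU^p}$), passes to the corresponding group $S=T_0\cap U_p/X$, works over the finite cover $\widetilde{\mathscr{W}}_\kappa$ of $\mathscr{W}_\kappa$ made from components indexed by characters of $T_0/(T_0\cap U_p)$, and only then deduces (via Chenevier's result that images of irreducible components of Fredholm-type eigenvarieties are Zariski open) that each irreducible component of $\mathscr{E}_\kappa$ has Zariski open image in $\mathscr{W}_\kappa$. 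Once that openness is established, your appeal to the density of algebraic weights and the classicality criterion of Emerton (Theorem 4.4.5) does finish the argument. Without that intermediate step your slope-decomposition claim on $\nu^{-1}(V)$ is unjustified, and the accumulation property does not follow merely from the density of algebraic weights in $\mathscr{W}_\kappa$.
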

\begin{proof}
The only condition we have to check is that $Z$ is an accumulation and Zariski dense subset of $\mathscr{E}_\kappa$ --- everything else follows from the construction of $\mathscr{E}_\kappa$. To prove this we have to interpret $\mathscr{E}_\kappa$ as part of (the nilreduction of) an eigenvariety constructed as in \cite{Bu2}, following \cite[Proposition 4.2.36]{MR2292633}, \cite[Lemma 5.13]{memathann}. The proof is slightly involved, the main reason being that we can show that eigenvarieties constructed with completed cohomology have nice properties only (a priori) after composing the map to weight space $\widehat{T}_0$ with a map corresponding to restriction to a finite index subgroup of $T_0$. This comes about because \cite[Proposition 4.2.36]{MR2292633} applies to `cofree' modules over an Iwasawa algebra, not `coprojective' modules.

Choose $U_p \subset G_B(\Q_p)$ a sufficiently small compact open subgroup such that $\widetilde{H}^1(U^p,L)_{\rhobar}$ is a cofree representation of $U_p/\overline{F^\times\cap U_pU^p}$, in the sense of \cite[Definition 5.7]{memathann}. Such a $U_p$ exists by \cite[Corollary 5.8]{memathann} --- in fact, it suffices to take $U_p$ pro-$p$, since our assumptions on the tame level already ensure that $U_pU^p$ is neat. 

Denoting the closed subgroup $\overline{F^\times\cap U_pU^p}$ of $U_p$ by $X$, we define a compact commutative $p$-adic analytic group by $S:= T_0 \cap U_p /X$, and denote the rigid space parameterising its continuous characters by $\widehat{S}$. The characters corresponding to points of $\mathscr{W}_\kappa$ are trivial on the units of $F$ with norm $1$, so (possibly shrinking $U_p$ if $p=2$), these characters are trivial on $X$. Therefore restriction to $T_0 \cap U_p$ gives a map $\mathscr{W}_\kappa \rightarrow \widehat{S}$. In fact, the definition of $\mathscr{W}_\kappa$ implies that this map is an isomorphism onto its image, which we denote by $\mathscr{W}_S$.

We denote by $\widetilde{\mathscr{W}}_\kappa$ the pre-image of $\mathscr{W}_S$ in $\mathscr{W}$. The space $\widetilde{\mathscr{W}}_\kappa$ is a finite disjoint union of open discs, whose components are indexed by characters of the finite group $T_0/T_0\cap U_p$. We denote by $\widetilde{Y}$ the fiber product $\widehat{T}\times_{\widehat{T}_0}\widetilde{\mathscr{W}}_\kappa$, and let $\mathscr{M}_{\widetilde{Y}}$ denote the pullback of $\mathscr{M}$ to a coherent sheaf on $\widetilde{Y}$. Mimicking the construction of $\mathscr{E}_\kappa$, we obtain a rigid space $\widetilde{\mathscr{E}}_\kappa$ with a finite map to $\widetilde{Y}$, such that $\mathscr{E}_\kappa$ is the open and closed subspace of $\widetilde{\mathscr{E}}_\kappa$ lying over $\mathscr{W}_\kappa$.

Consider an increasing sequence $X_n=\mathrm{Sp}(A_n)$ of admissible affinoid opens covering $\widehat{S}$, write $M$ for the space of global sections of $\mathscr{M}$ and $M_n$ for the base change $M\widehat{\otimes}_{\mathscr{C}^{an}(\widehat{S},L)}A_n$. It follows from \cite[Proposition 4.2.36]{MR2292633} that $M_n$ is the finite slope part of an orthonormalisable Banach $A_n$-module with the action of a compact operator (coming from the action of an element $z \in T$). We may choose the $X_n$ such that their inverse images in $\widetilde{\mathscr{W}}_\kappa$ are admissible affinoid opens $\widetilde{Y}_n=\mathrm{Sp}(B_n)$ (e.g. closed discs). It follows from \cite[Corollary 2.10]{Bu2} that the modules $M\widehat{\otimes}_{\mathscr{C}^{an}(\widehat{S},L)}B_n$ are likewise finite slope parts of orthonormalisable Banach $B_n$-modules with the action of a compact operator.

It now follows, as in \cite[Corollaire 6.4.4]{Chenun}, that the image of each irreducible component of  $\widetilde{\mathscr{E}}_\kappa$ in $\mathscr{W}_S$ is the image of a Fredholm hypersurface, and is therefore Zariski open in $\mathscr{W}_S$. For each irreducible component of $\widetilde{\mathscr{E}}_\kappa$, the map to $\mathscr{W}_S$ factors through one of the connected components of $\widetilde{\mathscr{W}}_\kappa$, so each irreducible component of $\widetilde{\mathscr{E}}_\kappa$ has Zariski open image in this connected component. In particular, each irreducible component of $\mathscr{E}_\kappa$ has Zariski open image in $\mathscr{W}_\kappa$.

The classicality criterion of \cite[Theorem 4.4.5]{MR2292633} now shows that $Z$ is Zariski dense and accumulation in $\mathscr{E}_\kappa$. 
\end{proof}

Now we know that $\mathscr{E}(\n,\m)_{\rhobar}^{\q\mhyphen sp}$ and $\mathscr{E}_\kappa$ are eigenvarieties for the same triple, we have the following:
\begin{corollary}\label{cohodesc}
The closed points $x \in \mathscr{E}(\n,\m)_{\rhobar}^{\q\mhyphen sp}$ with $\nu(x)=\lambda \in \widehat{T}(\overline{\Q}_p)$ correspond bijectively with systems of Hecke eigenvalues appearing in the (finite--dimensional) $\overline{\Q}_p$-vector space
\[J_B(\widetilde{H}^1(U^p,L)_{\rhobar})\otimes_{L}\overline{\Q}_p[\lambda]\] defined to be the subspace where $T$ acts via the character $\lambda$.
\end{corollary}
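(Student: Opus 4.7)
The plan is to invoke the uniqueness of abstract eigenvarieties. By Theorem \ref{buzeig} and Lemma \ref{Emertoneig}, both $\mathscr{E}(\n,\m)_{\rhobar}^{\q\mhyphen sp}$ and $\mathscr{E}_\kappa$ are eigenvarieties for the triple $(\cH(\m\n),\mathscr{Z}(\n,\m)^{\q\mhyphen sp}_{\rhobar},\mathscr{W}_\kappa)$, so Proposition 7.2.8 of \cite{BC} provides a unique isomorphism between them that is compatible with the ring homomorphisms from $\cH(\m\n)$ and the finite morphisms $\nu$ to $Y$. It therefore suffices to describe the closed points of $\mathscr{E}_\kappa$ above $\lambda$ in the desired way.

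For this I would unwind the construction of $\mathscr{E}_\kappa$ given in the proof of Lemma \ref{Emertoneig}. By definition, $\mathscr{E}_\kappa$ is the nilreduction of the relative spectrum over $Y$ of the commutative subalgebra of $\End_{\OO_Y}(\mathscr{M}_Y)$ generated by the image of $\cH(\m\n)$, where $\mathscr{M}_Y$ is the pullback of the coherent sheaf $\mathscr{M}$ on $\widehat{T}$ dual (via \cite[Proposition 2.3.2]{Emint}) to the essentially admissible locally analytic $T$-representation $J_B(\widetilde{H}^1(U^p,L)_{\rhobar})$. Taking the fibre of this duality at $\lambda$, the fibre of $\mathscr{M}$ at $\lambda$ is naturally dual to
\[J_B(\widetilde{H}^1(U^p,L)_{\rhobar})\otimes_L\overline{\Q}_p[\lambda],\]
which is in particular finite-dimensional, giving the finiteness assertion in the corollary.

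The closed points of $\mathscr{E}_\kappa$ lying above $\lambda$ are then in bijection with the maximal ideals of the reduced finite-dimensional commutative $\overline{\Q}_p$-algebra generated by the image of $\cH(\m\n)$ acting on this fibre, and these maximal ideals are in bijection with the systems of Hecke eigenvalues appearing on the dual vector space. Since the Hecke algebra is commutative and the transpose action on the finite-dimensional dual has the same set of systems of eigenvalues as the original action, this gives the required bijection with systems of eigenvalues in the eigenspace itself. The main point requiring care is the bookkeeping between the sheaf-theoretic side and the representation-theoretic side --- in particular one must verify that the fibre of $\mathscr{M}_Y$ at $\lambda$ really captures the exact $T$-eigenspace (rather than a generalised eigenspace), but this is precisely the content of the duality of \cite[\S 2.3]{Emint}.
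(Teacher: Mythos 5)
Your proof is correct and follows the same approach as the paper, which simply asserts that the corollary "follows from the construction of $\mathscr{E}_\kappa$ and Lemma \ref{Emertoneig}"; you have merely unwound that one-line assertion carefully, invoking the uniqueness statement \cite[Proposition 7.2.8]{BC} to transfer from $\mathscr{E}(\n,\m)_{\rhobar}^{\q\mhyphen sp}$ to $\mathscr{E}_\kappa$ and then using Emerton's duality together with the fact that $\cH(\m\n)$ acts through a faithful finite $\OO_Y$-subalgebra of $\mathcal{E}nd_{\OO_Y}(\mathscr{M}_Y)$ (so Nakayama rules out phantom closed points over $\lambda$).
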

\begin{proof}
This follows from the construction of $\mathscr{E}_\kappa$ and Lemma \ref{Emertoneig}.
\end{proof}
\section{Proof of Theorem \ref{mylgc}}\label{end}

We can also construct $\mathscr{E}(\n,\q)_{\rhobar}$ using the overconvergent Hilbert modular forms of \cite{KL}. Therefore we conclude from Theorem \ref{levellower} that there are overconvergent Hilbert modular eigenforms $g_{S} \in M_\kappa^\dagger(\n,\q)$ whose systems of Hecke eigenvalues correspond to those of the points $y_{S}$. 

Applying \cite[Theorem 7.10]{kassaei}, we glue the $g_{S}$ into a classical Hilbert modular eigenform $$g \in H^0(\mathscr{M}(\n,\q)_L,\underline{\omega}^{\kappa})$$
as in the statement of Proposition \ref{proplgc} (we are using more general tame levels than Kassaei, but this presents no problem). This completes the proof of Proposition \ref{proplgc}, and hence of Theorem \ref{mylgc}.
\section{Acknowledgments}
The author would like to thank F. Calegari, R. Moy and J. Specter for correspondence relating to the work \cite{MoySpe}. The author is supported by Trinity College, Cambridge, and the Engineering and Physical Sciences Research Council.

\end{document}